\newcommand{\ver}{September 6, 2008}
\newcommand{\bC}{{\mathbb C}}
\newcommand{\bN}{{\mathbb N}}
\newcommand{\bP}{{\mathbb P}}
\newcommand{\bQ}{{\mathbb Q}}
\newcommand{\bZ}{{\mathbb Z}}
\newcommand{\cA}{{\mathcal A}}
\newcommand{\cE}{{\mathcal E}}
\newcommand{\cG}{{\mathcal G}}
\newcommand{\cH}{{\mathcal H}}
\newcommand{\cJ}{{\mathcal J}}
\newcommand{\cO}{{\mathcal O}}
\newcommand{\cS}{{\mathcal S}}
\newcommand{\wti}{\widetilde}
\newcommand{\codim}{\hbox{\rm codim}\,}
\newcommand{\ra}{\rightarrow}
\newcommand{\rndown}[1] {\llcorner {#1} \lrcorner}
\newcommand{\rndup}[1] {\ulcorner {#1} \urcorner}
\theoremstyle{plain}
\newtheorem{thm}{Theorem}[section]
\newtheorem{cor}[thm]{Corollary}
\newtheorem{lem}[thm]{Lemma}
\newtheorem{prop}[thm]{Proposition}
\theoremstyle{definition}
\newtheorem{df}[thm]{Definition}
\newtheorem{rem}[thm]{Remark}
\title[Jumping numbers of hyperplane arrangements]{Jumping numbers of hyperplane arrangements}
\author{Nero Budur}
\address{Department of Mathematics, The
University of Notre Dame, IN 46556, USA} \email{nbudur@nd.edu}
\date{\ver}
\keywords{arrangements, multiplier ideals, spectrum}
\subjclass[2000]{32S22}
\thanks {The author was supported by the NSF grant DMS-0700360.}
\begin{document}

\maketitle

\begin{abstract}
M. Saito \cite{MS} proved that the jumping numbers of a
hyperplane arrangement depend only on the combinatorics of the
arrangement. However, a formula in terms of the combinatorial data was still
missing. In this note, we give a formula and a different proof of the fact
that the jumping numbers of a hyperplane arrangement depend only
on the combinatorics. We
also give a combinatorial formula for part of the Hodge spectrum
and for the inner jumping multiplicities.

\end{abstract}

\section{Introduction}

Jumping numbers are numerical measures of the complexity of the singularities of a variety (see section \ref{sec. review}). M. Saito \cite{MS} proved that the jumping numbers of a reduced
hyperplane arrangement depend only on the combinatorics of the
arrangement, answering a question of M. Musta\c{t}\u{a} \cite{Mu}. The method of his proof was by reduction to the
corresponding statement about the Hodge spectrum. His proof
extends to non-reduced arrangements as well by taking into account
the multiplicities along the hyperplanes in the arrangement.
However, a formula in terms of the combinatorial data was still
missing.

In this note, we give a formula and a different proof of the fact
that the jumping numbers of a hyperplane arrangement depend only
on the combinatorics and the multiplicities along hyperplanes. We
also give a combinatorial formula for part of the Hodge spectrum
and for the inner jumping multiplicities. Combinatorial formulas for those jumping numbers which change the support of the multiplier ideals have been obtained in \cite{Mu}- Example 2.3, for reduced arrangements, and refined by \cite{Te}- Remark 3.2.

 Let $\cA$ be a central hyperplane
arrangement in $\bC^n$. Denote the intersection lattice of $\cA$
by $L(\cA)$, that is the set of subspaces of $\bC^n$ which are
intersections of subspaces $V\in\cA$. We consider the
corresponding arrangement of projective hyperplanes in
$Y=\bP^{n-1}$ given by $\bP(V)$ for $V\in \cA$. Let $D$ be an
effective divisor on $Y$ supported on $Supp \,(D)=\cup_{V\in
\cA}\bP(V)$. We assume that $Supp\,(D)$ is the compactification of
a central hyperplane arrangement in some $\bC^{n-1}\subset Y$. For
our purposes, the general case can be reduced to this particular
case.

We will give a combinatorial criterion, in terms of $L(\cA)$ and
the multiplicities of $D$, for a positive rational number to be a
jumping number of $D$ in $Y$. It is known that $1$ is trivially a
jumping number of $D$ and that $c>1$ is a jumping number if and
only if $c-1$ is. Thus it is
enough to determine which $c\in (0,1)$ are jumping numbers of $D$.

Let $\cG'\subset L(\cA)-\{\bC^n\}$ be a building set (see
\cite{DP}-2.4 or \cite{Te}-Definition 1.2). Let
$\cG=\cG'\cup\{0\}$. For simplicity, one can stick with the
following example for the rest of the article:
$\cG=L(\cA)\cup\{0\}-\{\bC^n\}$, when $\cG'$ is chosen to be
$L(\cA)-\{\bC^n\}$. The advantage of considering smaller building
sets is that computations might be faster (see \cite{Te}-Example
1.3-(c)).

 For $V\in \cG$, define
$r(V)=\codim (V)$, $\delta(V)=\dim V$, and $$s(V)=\sum_ {V\subset
W\in \cA }\text{mult} _{\bP(W)}(D).$$ Set
$d=\sum_{V\in\cA}\text{mult}_{\bP(V)}(D)$ and
$$a_0=\max\left\{d-n+1,\sum_ {W\in \cG-\{0\}}\max\{0, s(W)-r(W)\}\right\}.$$
For any finite set $\cS$, set $|\cS |$ to be the number of elements of $\cS$. For a rational number $c$ let
$$\cS_c=\{\ V\in \cG-\{0\}\ |\ cs(V)\in\bZ\ \}.$$
For $V\in\cG$ let
$$
a_V(c)=\left\{
\begin{array}{ll}
r(V)-1-\rndown{cs(V)} &  \text{ if }V\in \cG-\{0\}, \notin \cS_c,\\
r(V)-cs(V) & \text{ if }V\in \cG-\{0\}, \in\cS_c,\\
-a_0 & \text{ if }V=0,
\end{array}\right.
$$

For a nonempty nested subset $\cS$ of $\cG-\{0\}$ and for
$V\in\cS\cup\{\bC^n\}$, denote by $V_\cS$ the subspace $\sum W$
where the sum is over $W\varsubsetneq V$ such that $W\in\cS$. In
other words, $V_\cS$ is the maximal element of $\cS$ which is
$\varsubsetneq V$. Set $V_\cS=0$ if there is no such maximal
element. Let $Q(x)=x/(1-\exp(-x))$ considered as an element of the
formal power series ring $\bQ[[x]]$.

\begin{df}\label{def. poly P} Let $\cS$ be a nonempty nested subset of
$\cG-\{0\}$ and let $V\in\cS\cup\{\bC^n\}$. For $W\in\cG$ with
$V_\cS\subset W\varsubsetneq V$ define a formal power series
$P_W^{\cS , V}\in\bQ [[c_{W'}]]_{W'\in \cG}$ as follows. If
$W=V_\cS$ set
$$P_W^{\cS , V}= Q(-\mathop{\sum_{W'\subset
V_\cS}}_{ \{W'\}\cup\cS\subset\cG\text{ nested} }
c_{W'})^{\delta(V)-\delta(V_\cS)}.
$$
If $W\ne V_\cS$ define
\begin{align*}
P_W^{\cS , V} & = Q(-\mathop{\sum_{W'\varsubsetneq W}}_{
\{W'\}\cup\cS\subset\cG\text{ nested}}
 c_{W'})^{-(\delta(V)-\delta(W))} \cdot Q(c_W)\cdot \\
 & \cdot Q(-\mathop{\sum_{ W'\subset
W}}_{ \{W'\}\cup\cS\subset\cG\text{ nested}
}c_{W'})^{\delta(V)-\delta(W)}.
\end{align*}
\end{df}

\begin{df}\label{def. T_j^S} Let $\cS$ be a nonempty nested subset of
$\cG-\{0\}$. Let $0\le j\le n-1-|\cS|$. Define the polynomial
$T_j^\cS\in \bQ[c_V]_{V\in\cG}$ to be the homogeneous part of
degree $j$ of the formal power series
$$T^\cS:=\prod_{V\in\cS\cup\{\bC^n\}} \ \ \mathop{\prod_{V_\cS\subset W\varsubsetneq
V}}_{W\in\cG} P^{\cS,V}_W.$$
\end{df}

Let $I\subset\bZ[c_V]_{V\in\cG}$ be the ideal of \cite{DP}-5.2
(for the projective case, see Remark \ref{rem. isom}). Recall that $I$ depends only on $\cG$
and that $\bZ[c_V]_{V\in\cG}/ I$ is isomorphic to the cohomology
ring of the canonical log resolution in terms of $\cG$ of $(Y,D)$,
i.e. the wonderful model of \cite{DP}. More precisely, $I$ is
generated by two types of polynomials:
\begin{equation}\label{eq, type 1}
\prod_{V\in \cH}c_V
\end{equation}
if $\cH\subset\cG$ is not a nested subset, and by
\begin{equation}\label{eq, type 2}
\prod_{V\in \cH}c_V\left (  \sum_{W'\subset W} c_{W'} \right
)^{d_{\cH,W}},
\end{equation}
where $\cH\subset \cG$ is a nested subset, $W\in\cG$ is such that
$W\varsubsetneq V$ for all $V\in\cH$, and
$d_{\cH,W}=\delta(\cap_{V\in\cH} V) -\delta (W)$. In (\ref{eq,
type 2}), one considers $\cH=\emptyset$ to be nested, in which
case (\ref{eq, type 2}) is defined for every $W\in\cG$ by setting
$\delta (\emptyset)=n$.

\begin{thm}\label{thm. jumping numbers} With the notation as
above, a rational number $c\in (0,1)$ is a jumping number of
$D\subset Y$ if and only if
$$
\mathop{\sum_{\text{nested}}}_{{\emptyset}\ne\cS\subset \cS_c}
\sum_{0\le j}^{n-1-|\cS|} \frac{(-1)^{|\cS|+1}}{j!} \left (
\sum_{V\in\cG} a_V(c)c_V \right )^{j} T_{n-1-|\cS|-j}^\cS
\prod_{V\in\cS} c_V
$$
does not belong to the ideal $I\subset \bQ[c_V]_{V\in\cG}$.
\end{thm}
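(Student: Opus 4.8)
The plan is to rewrite the condition ``$c$ is a jumping number of $D\subset Y$'' as the non-vanishing of an Euler characteristic on the canonical log resolution, and then to evaluate that Euler characteristic by Hirzebruch--Riemann--Roch; the displayed polynomial is exactly that Euler characteristic expressed in the presentation $\bQ[c_V]_{V\in\cG}/I\cong H^*(\wti Y;\bQ)$. To begin, let $\pi\colon\wti Y\to Y$ be the wonderful model of \cite{DP} in terms of $\cG$, so $\dim\wti Y=n-1$; the divisors $D_V$ for $V\in\cG-\{0\}$ (the exceptional divisors and the strict transforms of the $\bP(V)$) have classes $c_V$, and $c_0$ corresponds under Remark \ref{rem. isom} to $-\pi^*\cO_Y(1)$. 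From $\pi^*D=\sum_{V\in\cG-\{0\}}s(V)D_V$ and $K_{\wti Y/Y}=\sum_{V\in\cG-\{0\}}(r(V)-1)D_V$ we get $\cJ(cD)=\pi_*\cO_{\wti Y}\big(\sum_V(r(V)-1-\rndown{cs(V)})D_V\big)$ for $c\in(0,1)$, and the same with $c$ replaced by $c-\varepsilon$ ($0<\varepsilon\ll1$), where $\rndown{(c-\varepsilon)s(V)}=\rndown{cs(V)}$ unless $V\in\cS_c$, in which case it drops by $1$. Thus $\cJ(cD)\subseteq\cJ((c-\varepsilon)D)$ with torsion quotient $\mathcal{Q}_c$, and $c$ is a jumping number exactly when $\mathcal{Q}_c\ne0$. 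I would then prove that the explicit $a_0$ is large enough that $\mathcal{Q}_c\ne0$ is equivalent to $\chi(Y,\mathcal{Q}_c\otimes\cO_Y(a_0))\ne0$: the long exact sequence of $0\to\cJ(cD)(a_0)\to\cJ((c-\varepsilon)D)(a_0)\to\mathcal{Q}_c(a_0)\to0$ together with Nadel vanishing (which applies since $a_0\ge d-n+1$, making $\cO_Y(a_0+n-cd)$ and $\cO_Y(a_0+n-(c-\varepsilon)d)$ ample as $cd<d$) gives $H^{>0}(Y,\mathcal{Q}_c(a_0))=0$, hence $\chi(\mathcal{Q}_c(a_0))=h^0(\mathcal{Q}_c(a_0))\ge0$; and the other bound $a_0\ge\sum_{W\in\cG-\{0\}}\max\{0,s(W)-r(W)\}$ must be checked to push $\cO_Y(a_0)$ past the Castelnuovo--Mumford regularity of each of the finitely many sheaves $\mathcal{Q}_c$ that occur (the denominators of candidate jumping numbers being bounded by the $s(V)$), so that $h^0(\mathcal{Q}_c(a_0))>0$ whenever $\mathcal{Q}_c\ne0$. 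Granting this, $c$ is a jumping number if and only if $\chi(Y,\cJ((c-\varepsilon)D)(a_0))\ne\chi(Y,\cJ(cD)(a_0))$.

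By local vanishing $R^i\pi_*\cO_{\wti Y}(K_{\wti Y/Y}-\rndown{\pi^*cD})=0$ for $i>0$, so, setting $A:=K_{\wti Y/Y}-\rndown{\pi^*cD}+a_0\,\pi^*\cO_Y(1)$, we have $\chi(Y,\cJ(cD)(a_0))=\chi(\wti Y,\cO(A))$ and $\chi(Y,\cJ((c-\varepsilon)D)(a_0))=\chi(\wti Y,\cO(A+\sum_{V\in\cS_c}D_V))$. Since the $D_V$ form a simple normal crossings divisor whose components meet exactly along nested subsets of $\cG$, the inclusion--exclusion expansion of $\chi(\cO(A+\sum_{V\in\cS_c}D_V))-\chi(\cO(A))$ reads
\[
\chi\big(Y,\cJ((c-\varepsilon)D)(a_0)\big)-\chi\big(Y,\cJ(cD)(a_0)\big)=\mathop{\sum_{\text{nested}}}_{{\emptyset}\ne\cS\subseteq\cS_c}(-1)^{|\cS|+1}\,\chi\big(D_\cS,\cO(A+{\textstyle\sum_{V\in\cS_c}D_V})\big|_{D_\cS}\big),
\]
where $D_\cS=\bigcap_{V\in\cS}D_V$ is smooth of dimension $n-1-|\cS|$ and has class $\prod_{V\in\cS}c_V$ in $\wti Y$. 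Reading off the coefficient of each $D_V$ in $[A+\sum_{V\in\cS_c}D_V]$ --- namely $r(V)-1$ from $K_{\wti Y/Y}$, minus $\rndown{cs(V)}$ (plus $1$ when $V\in\cS_c$) from $\rndown{\pi^*cD}$, and $-a_0$ along $c_0$ from the $\cO_Y(a_0)$-twist --- identifies the restricted first Chern class with $\sum_{V\in\cG}a_V(c)c_V$, the $a_V(c)$ being precisely those of the statement.

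Finally I would apply Hirzebruch--Riemann--Roch on each stratum $D_\cS$. By the De Concini--Procesi description of the boundary strata, $D_\cS$ is isomorphic to a product $\prod_{V\in\cS\cup\{\bC^n\}}(\text{a smaller wonderful model})$ with a compatible splitting of its tangent bundle into line bundles whose first Chern classes are the $c_W$ and the nested partial sums $-\sum_{W'}c_{W'}$ appearing in Definition \ref{def. poly P}; since $Q(x)=x/(1-\exp(-x))$ is the Todd class of a line bundle with first Chern class $x$, this yields $\operatorname{td}(D_\cS)=T^\cS$ in $H^*(D_\cS)$. Using that $\operatorname{ch}$ of a line bundle is the exponential of its first Chern class, pushing forward from $D_\cS$ to $\wti Y$, and keeping the part of total degree $n-1$, we obtain
\[
\chi\big(D_\cS,\cO(A+{\textstyle\sum_{V\in\cS_c}D_V})\big|_{D_\cS}\big)=\int_{\wti Y}\ \sum_{j=0}^{n-1-|\cS|}\frac{1}{j!}\Big(\sum_{V\in\cG}a_V(c)c_V\Big)^{j}\,T^\cS_{\,n-1-|\cS|-j}\,\prod_{V\in\cS}c_V .
\]
Summing over $\cS$ with the signs $(-1)^{|\cS|+1}$ shows that $\chi(\cJ((c-\varepsilon)D)(a_0))-\chi(\cJ(cD)(a_0))$ equals $\int_{\wti Y}$ of exactly the polynomial in the theorem, which is homogeneous of degree $n-1=\dim\wti Y$. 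Under $\bQ[c_V]_{V\in\cG}/I\cong H^*(\wti Y;\bQ)$ such a top-degree homogeneous polynomial belongs to $I$ if and only if its image in $H^{2(n-1)}(\wti Y)\cong\bQ$ --- that is, its integral over $\wti Y$ --- vanishes. Combining the three steps gives the theorem.

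The two genuinely technical points are the identification $\operatorname{td}(D_\cS)=T^\cS$, which requires the precise normal-bundle structure of the boundary strata of the wonderful model and is the conceptual core of the computation, and the verification that the \emph{explicit} constant $a_0=\max\{d-n+1,\sum_W\max\{0,s(W)-r(W)\}\}$ is large enough uniformly over the finitely many $\mathcal{Q}_c$; I expect this last vanishing-and-regularity estimate to be the fussiest part of the argument.
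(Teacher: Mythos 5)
Your outline follows the same three-step architecture as the paper: reduce detection of a jump to the non-vanishing of an Euler characteristic on the wonderful model, expand by inclusion--exclusion over nested $\cS\subset\cS_c$, and evaluate each stratum's contribution by Hirzebruch--Riemann--Roch using a product formula for $Td(E_\cS)$. The inclusion--exclusion, the identification of the twisting divisor with $\sum_V a_V(c)c_V$, and the use of $I$ to reduce intersection numbers to membership in the ideal are all exactly as in the paper (Lemmas~\ref{lemma reduction to euler char}, \ref{lema after mayer-vietoris}, \ref{lem Computation of chi E_S} and Proposition~\ref{prop. todd class of E_S}).

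There is, however, a genuine gap at the step you yourself flag as ``fussiest.'' You need the \emph{specific} constant $a_0=\max\{d-n+1,\sum_W\max\{0,s(W)-r(W)\}\}$ to satisfy: $\mathcal Q_c\ne0\Rightarrow H^0(Y,\mathcal Q_c(a_0))\ne0$. You propose to get this from a Castelnuovo--Mumford regularity bound on the finitely many torsion sheaves $\mathcal Q_c$, but you give no mechanism tying the regularity of these quotients to the quantity $\sum_W\max\{0,s(W)-r(W)\}$; without the explicit structure of $\cJ(cD)$ this is unsupported, and a generic regularity bound would not produce the combinatorial $a_0$ that the theorem asserts. The paper closes this gap by a completely different, elementary route: it uses Musta\c{t}\u{a}/Teitler's description $\cJ(cD')=\bigcap_{W\in\cG'}I_W^{\,\rndup{cs'(W)}-r'(W)}$ (Proposition~\ref{prop characterization of jump. nos.}) together with Lemma~\ref{lema linear forms} on ideals generated by linear forms, which produces an explicit witness polynomial of degree at most $\sum_W\max\{0,s(W)-r(W)\}$ separating $\cJ((c-\epsilon)D')$ from $\cJ(cD')$ whenever they differ. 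This is the only place where the proof genuinely uses that $D$ is a hyperplane arrangement beyond the existence of the wonderful model, and it is what legitimizes the combinatorial formula for $a_0$. Your sketch would need either to reproduce this lemma (or an equivalent statement about powers of linear ideals) or to supply an actual regularity estimate for $\cJ(cD)$ in terms of the $s(W),r(W)$; absent that, the theorem is proved only for ``some sufficiently large $a_0$,'' not for the stated one. The Nadel vanishing step and the HRR/Todd class computation are correct in outline, though the latter ultimately rests on the normal-bundle description (\cite{DP}-5.1) and the explicit pullback identities of Lemma~\ref{lem. pullback of divisors}, which you defer to but do not re-derive.
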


Since we are assuming that $D$ is the compactification of a
central hyperplane arrangement in $\bC^{n-1}$, let $x\in Y$ be the
point corresponding to the origin of $\bC^{n-1}$. As for jumping
numbers, the method of the proof of Theorem \ref{thm. jumping
numbers} gives a formula in terms of combinatorics for the inner
jumping multiplicities $n_{c,x}(D)$ of a positive rational number
$c$ along $D$ at the point $x$ (see section \ref{sec. review}).

\begin{thm}\label{thm. inner jumping numbers}  With the notation as above, let $c$ be a positive rational number.
Then the inner jumping multiplicity of $c$ along $D$ at $x$ is $0$
if there are no subspaces $V\in\cG$ with $\delta (V)=1$ or if
$cd\not\in \bZ$. Otherwise, let  $V_x\in\cG$ be the only subspace
with $\delta=1$, that is $\bP(V_x)=\{x\}$. Then
$$n_{c,x}(D)= \sum_{0\le j\le n-2}\frac{1}{j!}\left (
\sum_{V\in\cG-\{0\}} a_V(c)c_V \right
)^{j}T_{n-2-j}^{\{V_x\}}c_{V_x},
 $$
where the right-hand side is viewed as a number via identification
of the degree $n-1$ homogeneous part of $\bQ[c_V]_{V\in\cG}/I$
with $\bQ\cdot (-c_0)^{n-1}$.
\end{thm}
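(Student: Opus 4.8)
The plan is to run the argument of Theorem \ref{thm. jumping numbers} but to isolate the contribution of the single exceptional divisor lying over $x$. I would work on the wonderful model $\pi\colon\tX\to Y$ attached to $\cG$ — a log resolution of $(Y,D)$ with boundary divisors $E_V$, $V\in\cG-\{0\}$. Under the isomorphism $\bQ[c_V]_{V\in\cG}/I\cong H^{2\bullet}(\tX;\bQ)$ of \cite{DP} one has $c_V=[E_V]$ for $V\in\cG-\{0\}$ and $c_0=-\pi^*(\text{hyperplane class of }Y)$; hence $\int_{\tX}(-c_0)^{n-1}=1$ — the identification of the top graded piece used in the statement — and $\int_{\tX}(\alpha\cdot c_{V_x})=\int_{E_{V_x}}\alpha|_{E_{V_x}}$ for every class $\alpha$. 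I would also record the divisor identity $\sum_{V\in\cG-\{0\}}a_V(c)\,E_V = K_{\tX/Y}-\rndown{c\,\pi^*D}+\sum_{V\in\cS_c}E_V$, which follows from $K_{\tX/Y}=\sum_{V\in\cG-\{0\}}(r(V)-1)E_V$, $\pi^*D=\sum_{V\in\cG-\{0\}}s(V)E_V$ and the definition of $a_V(c)$. The two special cases are disposed of first: by the description of $n_{c,x}(D)$ recalled in Section \ref{sec. review}, if $\cG$ has no element of dimension $1$ then $\bigcap_{V\in\cA}V$ has dimension $\ge2$, the germ of $(Y,D)$ at $x$ is a product with a positive-dimensional smooth factor, and $n_{c,x}(D)=0$; and because the Milnor monodromy of a local equation of $D$ at $x$ has order dividing $d$, the inner spectrum of $D$ at $x$ lies in $\tfrac1d\bZ$, so $n_{c,x}(D)=0$ when $cd\notin\bZ$. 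Assume henceforth $V_x\in\cG$, $\delta(V_x)=1$, $cd\in\bZ$; since every hyperplane of the support of $D$ passes through $x$ we get $s(V_x)=d$, hence $V_x\in\cS_c$.

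The central step is to prove
$$n_{c,x}(D)\;=\;\chi\bigl(E_{V_x},\ \mathcal{O}_{E_{V_x}}(A_c)\bigr),\qquad A_c:=\Bigl(\sum_{V\in\cG-\{0\}}a_V(c)\,E_V\Bigr)\Big|_{E_{V_x}} .$$
By Section \ref{sec. review}, $n_{c,x}(D)$ is computed on the log resolution from the Hodge-filtered nearby-cycle data of $D$ over $x$; concretely it is a dimension attached, at $x$, to the jump $\mathcal{J}((c-\varepsilon)D)/\mathcal{J}(cD)$, which by local vanishing equals $\pi_*$ of $\mathcal{O}_{\tX}(K_{\tX/Y}-\rndown{c\pi^*D}+\sum_{V\in\cS_c}E_V)/\mathcal{O}_{\tX}(K_{\tX/Y}-\rndown{c\pi^*D})$. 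That sheaf carries a filtration whose graded pieces are supported on the strata $E_\cS=\bigcap_{V\in\cS}E_V$ for nested $\emptyset\ne\cS\subset\cS_c$. Here I would use that, since $V_x\in\cG$, the nestedness of an $\cS$ with $\bigcap_{V\in\cS}V=V_x$ forces $\cS$ to have a unique minimal element, which is then $V_x$; hence every stratum of $\tX$ mapping to $x$ is contained in $E_{V_x}$, and the contribution to $n_{c,x}(D)$ is read off $E_{V_x}$. Applying the same Kawamata--Viehweg/Nadel-type vanishing as in Theorem \ref{thm. jumping numbers} to kill higher cohomology leaves the single Euler characteristic above. Unlike in Theorem \ref{thm. jumping numbers}, no twist by $\pi^*\mathcal{O}_Y(a_0)$ is needed — there it only serves to endow a codimension-one jump sheaf with global sections — and in any case $\pi^*\mathcal{O}_Y(a_0)$, hence $c_0$, restricts to $0$ on $E_{V_x}=\pi^{-1}(x)$ (equivalently $c_0c_{V_x}=0$, a defining relation of $I$), which is why $c_0$ is absent from the formula.

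Finally I would apply Hirzebruch--Riemann--Roch on the smooth projective $(n-2)$-fold $E_{V_x}$: $n_{c,x}(D)=\int_{E_{V_x}}\exp(A_c)\cdot\mathrm{Td}(TE_{V_x})$. It then remains to identify $\mathrm{Td}(TE_{V_x})$ with the restriction of $T^{\{V_x\}}$ from Definition \ref{def. T_j^S} — the same calculation as in Theorem \ref{thm. jumping numbers}. By \cite{DP}, $E_{V_x}$ is the (projective) wonderful model of the arrangement induced on $\bP(\bC^n/V_x)$, an iterated projective bundle over the flag $0\varsubsetneq V_x\varsubsetneq\bC^n$ refined by $\cG$, so $\mathrm{Td}(TE_{V_x})$ is a product of factors $Q$ of the relative hyperplane classes written through the $c_W|_{E_{V_x}}$; combined with $\mathrm{Td}(T\tX)|_{E_{V_x}}=\mathrm{Td}(TE_{V_x})\cdot Q(c_{V_x})|_{E_{V_x}}$ (adjunction for $0\to TE_{V_x}\to T\tX|_{E_{V_x}}\to\mathcal{O}_{E_{V_x}}(E_{V_x})\to0$), this reproduces exactly the factors $P_W^{\{V_x\},V_x}$ ($0\subset W\varsubsetneq V_x$) and $P_W^{\{V_x\},\bC^n}$ ($V_x\subset W\varsubsetneq\bC^n$) of Definition \ref{def. poly P}, whose product is $T^{\{V_x\}}$. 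Substituting $A_c$, expanding the exponential, taking the degree-$(n-2)$ part and multiplying by $c_{V_x}$ — which replaces $\int_{E_{V_x}}$ by evaluation of a degree-$(n-1)$ class of $\tX$ as a multiple of $(-c_0)^{n-1}$ — gives the stated formula. The hard part will be the central step: extracting the clean identity $n_{c,x}(D)=\chi(E_{V_x},\mathcal{O}_{E_{V_x}}(A_c))$ from the Hodge-theoretic definition of the inner jumping multiplicity — singling out the correct graded piece, checking that the relevant higher cohomology vanishes, and verifying that the whole contribution localizes to $E_{V_x}$; after that, the rest is the Hirzebruch--Riemann--Roch bookkeeping on wonderful-model strata already carried out for Theorem \ref{thm. jumping numbers}, specialized to $\cS=\{V_x\}$.
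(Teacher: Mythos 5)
Your approach is the paper's approach: identify $n_{c,x}(D)$ with a single Euler characteristic $\chi\bigl(E_{V_x},\mathcal{O}_{E_{V_x}}(\sum_{V\neq 0}a_V(c)E_V)\bigr)$, then run Hirzebruch--Riemann--Roch on the stratum $E_{V_x}$ using the same Todd-class bookkeeping as for Theorem \ref{thm. jumping numbers} specialized to $\cS=\{V_x\}$. Your observation that $c_0c_{V_x}\in I$ (type-(\ref{eq, type 2}) relation with $\cH=\{V_x\}$, $W=0$, $d_{\cH,W}=1$) is a clean way to see why the $c_0$-term drops out; the paper phrases this more tersely as ``replace $a_0$ by $0$ in the proof of Lemma \ref{lem Computation of chi E_S}.''

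The genuine gap is exactly where you flag it. The central identity $n_{c,x}(D)=\chi\bigl(E_{V_x},\mathcal{O}_{E_{V_x}}(A_c)\bigr)$, together with the dichotomy in the two special cases, is the content of \cite{Bu}-Proposition 2.7(ii), which the paper cites outright rather than re-deriving: that result gives $n_{c,x}(D)=\chi\bigl(\wti{Y},\,\cO_{E^{\cS_{c,x}}}(K_{\wti Y/Y}-\rndown{(c-\epsilon)\rho^*D})\bigr)$, with $\cS_{c,x}=\{V_x\}$ when $cd\in\bZ$ and a divisor of $\wti Y$ lies over $x$, and $\cS_{c,x}=\emptyset$ otherwise (whence the vanishing in your two special cases, for all $c>0$, not only $c\in(0,1]$). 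Your sketch of a derivation conflates $n_{c,x}(D)=\dim\cJ((c-\epsilon)D)/\cJ((c-\epsilon)D+\delta\{x\})$ with the jump $\cJ((c-\epsilon)D)/\cJ(cD)$; the inner multiplicity is the piece of that jump supported ``infinitesimally at $x$,'' and extracting it (and proving the needed vanishings so that $H^0$ becomes $\chi$) is precisely what the quoted proposition accomplishes. Likewise your monodromy heuristic for the case $cd\notin\bZ$ relies on identifying $n_{c,x}$ with a spectrum multiplicity, which is only available for $c\in(0,1]$, so it does not cover the full range of $c$ in the statement. In short: right strategy, but the key input is a citation you did not have, and the surrogate arguments you offer for it are incomplete.
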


By a result of \cite{Bu} (see also \cite{BS}), for $c\in (0,1]$
the inner jumping multiplicities $n_{c,x}(D)$ are the
multiplicities of $c$ in the Hodge spectrum of $D$ at $x$
(\cite{St}). Thus we have a combinatorial formula for the
beginning part of the Hodge spectrum of a central hyperplane
arrangement.

In section 2 we review multiplier ideals and intersection theory.
In section 3 we set the problem into global setting, in
preparation for using the Hirzebruch-Riemann-Roch theorem. In
section 4, we prove Theorems \ref{thm. jumping numbers} and
\ref{thm. inner jumping numbers} via Hirzebruch-Riemann-Roch on
wonderful models. In the last section we give examples
illustrating how Theorems \ref{thm. jumping numbers} and \ref{thm.
inner jumping numbers} work.

In this article, inclusion of sets is denoted by $\subset$ and
strict inclusion of sets is denoted by $\subsetneq$.

We would like to thank M. Saito, to whom we are indebted for the
proof of Lemma \ref{lema linear forms}, for sharing with us the
results of his preprint \cite{MS} which was the inspiration behind
this article, and for many comments. We also thank M.
Musta\c{t}\u{a}, M. Schulze, and Z. Teitler for useful
discussions. The author was supported by the NSF grant
DMS-0700360.

\medskip

\begin{section}{Review of multiplier ideals, intersection
theory}\label{sec. review}

The notation of the current section is independent of the rest of
the article.

\medskip
\noindent {\bf Multiplier ideals.} We review some basic facts from
the theory of multiplier ideals (see \cite{La}- Chapter 9). Let
$Y$ be a smooth complex variety. Let $D$ be an effective
$\bQ$-divisor on $Y$. Let $\rho: Y' \ra Y$ be a log resolution of
$(Y,D)$ and let $K_{Y'/Y}$ be the relative canonical divisor. The
{\it multiplier ideal} of $D$ is the ideal sheaf
$$\cJ(D) = \rho_* \cO_{Y'}(K_{Y'/Y}-\rndown{\rho^* D})\ \ \ \subset
\cO_Y.$$ The choice of log resolution does not matter in the
definition of the $\cJ(D)$ and one can extend the definition by
allowing, instead of $D$, any finite formal linear combination of
subschemes of $Y$ with positive coefficients. A positive rational
number $c$ is called a {\it jumping number} of $D$ if $\cJ(c\cdot
D)\ne \cJ((c-\epsilon)\cdot D)$ for all $0< \epsilon \ll 1$. It is
known that a positive rational number $c$ is a jumping number if
and only if $c+1$ is a jumping number (\cite{La}- Example 9.3.24).
Let $x$ be a point in the support of $D$ and let $c>0$. The {\it
inner jumping multiplicity} of $c$ along $D$ at $x$ (\cite{Bu}-
Definition 2.4) is defined as
$$n_{c,x}(D)=\dim_\bC \frac{\cJ((c-\epsilon)D)}{\cJ((c-\epsilon)D+\delta
\{x\})}\ ,$$ where $0<\epsilon\ll\delta\ll 1$. By
\cite{Bu}-Proposition 2.8, if the inner jumping multiplicity of
$c$ is nonzero then $c$ is a jumping number.

\begin{thm}\label{thm local vanishing} (Local vanishing, \cite{La}- Theorem
9.4.1). With the notation as above,
$$R^j\rho_*\cO_{Y'}(K_{Y'/Y}-\rndown{\rho^* D}) =0\ \ \text{ for
}j>0.$$
\end{thm}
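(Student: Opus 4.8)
\medskip
\noindent\textbf{Proof idea.}\quad The plan is to deduce this from the relative Kawamata--Viehweg vanishing theorem after rewriting the sheaf $\cO_{Y'}(K_{Y'/Y}-\rndown{\rho^*D})$ in a form adapted to it. Since the formation of the higher direct images $R^j\rho_*$ commutes with restriction to open subsets of $Y$, the statement is local on $Y$ and we may assume $Y$ affine; as $Y$ is smooth, $K_Y$ is Cartier and $D$ is $\bQ$-Cartier, so $\rho^*(K_Y+D)$ is defined. Write the fractional part $\{\rho^*D\}:=\rho^*D-\rndown{\rho^*D}$. Because $\rho$ is a log resolution of $(Y,D)$, the preimage of the support of $D$ together with the exceptional locus of $\rho$ is a divisor with simple normal crossing support; hence $\{\rho^*D\}$ is an effective $\bQ$-divisor with simple normal crossing support all of whose coefficients lie in $[0,1)$, so that $\rndown{\{\rho^*D\}}=0$.

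Using $K_{Y'/Y}=K_{Y'}-\rho^*K_Y$ together with $\rndown{\rho^*D}=\rho^*D-\{\rho^*D\}$, one obtains the identity of $\bQ$-divisors
$$K_{Y'/Y}-\rndown{\rho^*D}\ =\ K_{Y'}+\{\rho^*D\}-\rho^*(K_Y+D).$$
Thus the integral divisor $N:=K_{Y'/Y}-\rndown{\rho^*D}$ satisfies $N-\bigl(K_{Y'}+\{\rho^*D\}\bigr)=-\rho^*(K_Y+D)$, and the right-hand side is pulled back from $Y$; it is therefore numerically trivial on every fibre of $\rho$, in particular $\rho$-nef, and since $\rho$ is birational it is also $\rho$-big. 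I would then apply the relative Kawamata--Viehweg vanishing theorem to the morphism $\rho\colon Y'\ra Y$, to the integral divisor $N$, and to the boundary $\{\rho^*D\}$ (whose support has simple normal crossings and whose coefficients lie in $[0,1)$), concluding $R^j\rho_*\cO_{Y'}(N)=0$ for all $j>0$, which is exactly the assertion.

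The content of the argument is concentrated in the two inputs. The first is immediate: the definition of a log resolution guarantees that $\{\rho^*D\}$ has simple normal crossing support with coefficients in $[0,1)$, which is precisely the hypothesis making the relative vanishing theorem applicable. The second is the crucial one: the ``twisting divisor'' $-\rho^*(K_Y+D)$ is only relatively \emph{nef} (and relatively big only for the trivial reason that $\rho$ contracts subvarieties to points), never relatively ample, so one genuinely needs the Kawamata--Viehweg form of vanishing rather than merely relative Kodaira vanishing; this is where the real weight of the statement lies. If one prefers to reduce to the absolute statement rather than quote the relative one, one can compactify $Y$ to a smooth projective variety, extend $\rho$ and $D$ so that the extended map remains a log resolution, twist $\cO_{Y'}(N)$ by $\rho^*\cO(mH)$ with $H$ ample on the compactification and $m\gg0$, and use Serre vanishing together with the degeneration of the Leray spectral sequence to reduce $R^j\rho_*\cO_{Y'}(N)=0$ to the vanishing of $H^j\bigl(Y',\cO_{Y'}(N)\otimes\rho^*\cO(mH)\bigr)$; the latter holds by the ordinary Kawamata--Viehweg vanishing theorem applied to $K_{Y'}+\{\rho^*D\}+\rho^*(mH-K_Y-D)$, since $mH-K_Y-D$ is ample for $m\gg0$ and its pullback under the birational morphism $\rho$ is nef and big.
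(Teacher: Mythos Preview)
The paper does not give its own proof of this statement: it is quoted verbatim from \cite{La}, Theorem 9.4.1, as background material in the review section, and is used later as a black box. So there is nothing in the paper to compare your argument against.

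That said, your argument is correct and is essentially the standard proof one finds in \cite{La}. The key rewriting
$$K_{Y'/Y}-\rndown{\rho^*D}=K_{Y'}+\{\rho^*D\}-\rho^*(K_Y+D)$$
is exactly how one places the sheaf in the form required by relative Kawamata--Viehweg, and your observation that the pulled-back divisor $-\rho^*(K_Y+D)$ is $\rho$-numerically trivial (hence $\rho$-nef, and $\rho$-big because $\rho$ is birational) is the right input. The alternative reduction you sketch---compactify, twist by a sufficiently ample pullback, and combine absolute Kawamata--Viehweg with Leray---is also standard and is in fact closer to how Lazarsfeld organizes the exposition. Either route is acceptable; there is no gap.
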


\begin{thm}\label{thm Nadel vanishing} (Nadel vanishing, \cite{La}- Theorem
9.4.9). With the notation as above, assume in addition that $Y$ is
projective. Let $L$ be any integral divisor such that $L-D$ is nef
and big. Then
$$H^i(Y,\cO_Y(K_Y+L)\otimes\cJ(D))=0\ \ \ \text{ for }i>0.$$
\end{thm}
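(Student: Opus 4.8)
The plan is to pull everything back to a log resolution and combine Theorem \ref{thm local vanishing} with the Kawamata--Viehweg vanishing theorem; the statement is the classical Nadel vanishing theorem and this is its standard proof. Fix a log resolution $\rho\colon Y'\ra Y$ of $(Y,D)$, so that by definition $\cJ(D)=\rho_*\cO_{Y'}(K_{Y'/Y}-\rndown{\rho^*D})$ and, by Theorem \ref{thm local vanishing}, $R^j\rho_*\cO_{Y'}(K_{Y'/Y}-\rndown{\rho^*D})=0$ for $j>0$. Since $\cO_Y(K_Y+L)$ is locally free, the projection formula gives
\[
\cO_Y(K_Y+L)\otimes\cJ(D)=\rho_*\cO_{Y'}\bigl(K_{Y'}+\rho^*L-\rndown{\rho^*D}\bigr),
\]
and, applying the projection formula together with Theorem \ref{thm local vanishing} again, the higher direct images of $\cO_{Y'}(K_{Y'}+\rho^*L-\rndown{\rho^*D})$ vanish. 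Hence the Leray spectral sequence for $\rho$ degenerates and
\[
H^i\bigl(Y,\cO_Y(K_Y+L)\otimes\cJ(D)\bigr)\;\cong\;H^i\bigl(Y',\cO_{Y'}(K_{Y'}+\rho^*L-\rndown{\rho^*D})\bigr)
\]
for all $i$, so it suffices to make the right-hand side vanish for $i>0$.

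Next I would rewrite the divisor on $Y'$. Using the decomposition $\rho^*D=\rndown{\rho^*D}+\{\rho^*D\}$ into integral and fractional parts, one has the identity of divisors
\[
\rho^*L-\rndown{\rho^*D}=\rho^*(L-D)+\{\rho^*D\}.
\]
Since $L-D$ is nef and big and $\rho$ is birational, $\rho^*(L-D)$ is a nef and big $\bQ$-divisor; and because $\rho$ is a \emph{log} resolution of $(Y,D)$, the fractional part $\{\rho^*D\}$ is an effective $\bQ$-divisor with all coefficients in $[0,1)$ supported on a simple normal crossings divisor. Thus the integral divisor $\rho^*L-\rndown{\rho^*D}$ is written as a nef and big $\bQ$-divisor plus a simple normal crossings boundary with zero round-down, and the Kawamata--Viehweg vanishing theorem in its form for nef and big $\bQ$-divisors (see \cite{La}) yields $H^i(Y',\cO_{Y'}(K_{Y'}+\rho^*L-\rndown{\rho^*D}))=0$ for $i>0$. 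Together with the isomorphism of the first paragraph, this proves the theorem.

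The only genuinely nontrivial input is Kawamata--Viehweg vanishing in its $\bQ$-divisor version; everything else is bookkeeping: the projection formula, the degeneration of the Leray spectral sequence, and the fact that the statement does not depend on the chosen log resolution (which follows from the corresponding independence of $\cJ(D)$ and the degeneration just used). The step that needs the most care is the rewriting in the second paragraph: because $\rho^*(L-D)$ is itself a $\bQ$-divisor, one must use the decomposition $\rho^*L-\rndown{\rho^*D}=\rho^*(L-D)+\{\rho^*D\}$ rather than a naive rounding in order to exhibit the numerical shape required by Kawamata--Viehweg, and one must check from the log resolution hypothesis that $\{\rho^*D\}$ indeed has simple normal crossings support and coefficients in $[0,1)$, and that $\rho^*(L-D)$ remains nef and big after pullback along the birational morphism $\rho$.
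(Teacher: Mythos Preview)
Your argument is correct and is exactly the standard proof of Nadel vanishing as given in \cite{La}. Note, however, that the paper does not prove this theorem at all: it is quoted in the review section with a reference to \cite{La}- Theorem 9.4.9, so there is no ``paper's own proof'' to compare against; your write-up simply supplies the classical argument that the cited reference contains.
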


\medskip

\noindent {\bf Intersection theory.} We recall some facts about
intersection theory (see \cite{Fu}). Let $Y$ be a smooth
projective complex variety. For a vector bundle, or locally free
$\cO_Y$-module of finite rank, $\cE$ on $Y$, we denote by
$c_j(\cE)$ the image of the $j$-th Chern class of $\cE$ in
$H^{2j}(Y,\bZ)$. The total Chern class is defined to be
$c(\cE)=\sum_j c_j(\cE)$ in the cohomology ring $H^*(Y,\bZ)$. The
roots $x_i$ of $\cE$ are formal symbols satisfying the formal
decomposition $\sum_j c_j(\cE)t^j=\prod_i (1+x_it)$. Then one
defines $ch(\cE)=\sum _i\exp (x_i)$, and writes $ch (\cE)=\sum_j
ch_j(\cE)$ with $ch_j(\cE)\in H^{2j}(Y,\bQ)$. The Todd class of
$\cE$ is defined as $td (\cE)=\prod Q(x_i)$, where
$Q(x)=x/(1-\exp(-x))$. The Todd class of $Y$ is denoted by $Td(Y)$
and is defined as the Todd class of the tangent bundle of $Y$. One
writes $Td(Y)=\sum_j Td_j(Y)$ where $Td_j(Y)\in H^{2j}(Y,\bQ)$.

\begin{thm}\label{thm HRR} (Hirzebruch-Riemann-Roch, \cite{Fu}- Corollary
15.2.1) Let $\cE$ be a vector bundle on a smooth projective
complex variety $Y$. Then $\chi (Y,\cE)$ is the intersection
number $\sum_{i+j=\dim Y} ch_i(\cE)\cdot Td_j(Y)$.
\end{thm}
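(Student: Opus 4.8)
The plan is to translate ``jumping number'' into the nonvanishing of an Euler characteristic on the wonderful model $Y'$, and then run Hirzebruch--Riemann--Roch (Theorem~\ref{thm HRR}). Let $\rho\colon Y'\ra Y$ be the canonical log resolution of $(Y,D)$ attached to $\cG$, so $\cJ(cD)=\rho_*\cO_{Y'}(K_{Y'/Y}-\rndown{\rho^*cD})$ and, by local vanishing (Theorem~\ref{thm local vanishing}) and the projection formula, $R^{>0}\rho_*$ of the twist of this sheaf by any $\rho^*\cM$ vanishes. Set $L=\cO_Y(a_0+n)$, so $K_Y+L=\cO_Y(a_0)$; since $c\in(0,1)$ the choice of $a_0$ makes $L-cD$ and $L-(c-\epsilon)D$ nef and big, hence Nadel vanishing (Theorem~\ref{thm Nadel vanishing}) gives
\[
\chi\bigl(Y,\cJ(c'D)\otimes\cO_Y(K_Y+L)\bigr)=h^0\bigl(Y,\cJ(c'D)\otimes\cO_Y(K_Y+L)\bigr)=\chi\bigl(Y',\cO_{Y'}(F_{c'})\bigr)
\]
for $c'\in\{c,\,c-\epsilon\}$, where $F_{c'}:=K_{Y'}-\rndown{\rho^*c'D}+\rho^*L$. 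By left exactness of $H^0$ the difference $\chi(Y',\cO(F_{c-\epsilon}))-\chi(Y',\cO(F_c))$ equals $h^0$ of the twist by $\cO_Y(K_Y+L)$ of the jump sheaf $\cJ((c-\epsilon)D)/\cJ(cD)$; so it is $\ge0$ and is positive exactly when that jump sheaf is nonzero, i.e. when $c$ is a jumping number, provided $a_0$ is large enough that each of the finitely many jump sheaves arising for $c\in(0,1)$ has a nonzero section after this twist. Establishing that the explicit $a_0$ above suffices is the first of two technical points, and this is where $\sum_{W}\max\{0,s(W)-r(W)\}$ enters, as the slack by which $\rho^*L$ restricted to an exceptional stratum must overcome the negative part of $\rndown{\rho^*cD}$.

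Next I would localize this difference to the exceptional strata. Writing $D=\sum_{V\in\cA}(\text{mult}_{\bP(V)}D)\,\bP(V)$ and pulling back, $\rho^*D=\sum_{V\in\cA}(\text{mult}_{\bP(V)}D)\,\widetilde{\bP(V)}+\sum_{W\in\cG-\{0\}}s(W)E_W$, so for $0<\epsilon\ll1$ the floor drops by one precisely along the $E_W$ with $cs(W)\in\bZ$; that is, $\rndown{\rho^*cD}-\rndown{\rho^*(c-\epsilon)D}=G:=\sum_{W\in\cS_c}E_W$. With $F:=F_c$, the sequence $0\ra\cO_{Y'}(F)\ra\cO_{Y'}(F+G)\ra\cO_G(F+G)\ra0$ gives $\chi(Y',\cO(F+G))-\chi(Y',\cO(F))=\chi(G,\cO_G(F+G))$, and since $E_W\cap E_{W'}\ne\emptyset$ iff $\{W,W'\}$ is nested, inclusion--exclusion over the components of the reduced divisor $G$ yields
\[
\chi\bigl(G,\cO_G(F+G)\bigr)=\mathop{\sum_{\text{nested}}}_{\emptyset\ne\cS\subset\cS_c}(-1)^{|\cS|+1}\,\chi\bigl(E_\cS,\,\cO_{Y'}(F+G)|_{E_\cS}\bigr),\qquad E_\cS:=\mcap_{W\in\cS}E_W .
\]

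I would then compute each summand by HRR on $E_\cS$. By De Concini--Procesi, $E_\cS$ is smooth projective of dimension $n-1-|\cS|$, a product of lower wonderful models indexed by the intervals of $\cS\cup\{\bC^n\}$, and $[E_\cS]=\prod_{W\in\cS}c_W$ in $H^*(Y')=\bZ[c_V]_{V\in\cG}/I$. Pushing forward along $E_\cS\hookrightarrow Y'$,
\[
\chi\bigl(E_\cS,\cO(F+G)|_{E_\cS}\bigr)=\int_{E_\cS}\!ch\bigl(\cO(F+G)|_{E_\cS}\bigr)\,Td(E_\cS)=\int_{Y'}\exp\!\Bigl(\msum_{V\in\cG}a_V(c)c_V\Bigr)\,T^\cS\,\mprod_{W\in\cS}c_W ,
\]
once one checks the two identifications that make this equality hold. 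First, the degree $1$ class of $F+G=F_{c-\epsilon}$ is $\sum_{V\in\cG}a_V(c)c_V$: its $c_0=-h$ component is $a_0h=\rho^*\cO_Y(a_0)$, while the $c_W$-components, read off from $K_{Y'/Y}-\rndown{\rho^*(c-\epsilon)D}$, are $r(W)-1-\rndown{(c-\epsilon)s(W)}$, which equals $r(W)-1-\rndown{cs(W)}$ off $\cS_c$ and $r(W)-cs(W)$ on $\cS_c$ (for $V\in\cA$ one uses $s(V)=\text{mult}_{\bP(V)}D$). Second, $Td(E_\cS)$ is represented by the polynomial $T^\cS$ of Definition~\ref{def. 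T_j^S}. Granting these, one expands $\exp(\cdot)$, keeps only the homogeneous part of degree $n-1=\dim Y'$ (the rest does not survive $\int_{Y'}$, which forces $j$ to run from $0$ to $n-1-|\cS|$ and $T^\cS_{n-1-|\cS|-j}$ to appear), and the signed sum over $\cS$ becomes exactly the polynomial in Theorem~\ref{thm. jumping numbers}; as $\int_{Y'}$ identifies $(\bZ[c_V]_{V\in\cG}/I)_{\deg n-1}$ with $\bZ$, that polynomial is a nonzero number iff it is not in $I$. Combined with the first paragraph, $c\in(0,1)$ is a jumping number iff this polynomial lies outside $I$, as claimed.

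The main obstacle is the second identification, $Td(E_\cS)=T^\cS$: one must write the tangent bundle of the stratum $E_\cS$ in the coordinates $c_V$ and take its Todd class, and this is precisely what Definitions~\ref{def. poly P}--\ref{def. T_j^S} encode. Using De Concini--Procesi's description of $E_\cS$ as an iterated projective bundle (product of wonderful models) one finds that the Todd class factors, over $V\in\cS\cup\{\bC^n\}$ and over the levels $W$ of a flag with $V_\cS\subset W\varsubsetneq V$, into contributions of the form $Q(c_W)$ (the normal line bundle $\cO(E_W)$) and $Q(-\msum c_{W'})^{\pm(\delta(V)-\delta(W))}$ with the precise restricted sums of Definition~\ref{def. poly P} (the fibre directions created or destroyed at level $W$); assembling these is the power series $T^\cS=\prod_{V}\prod_{W}P_W^{\cS,V}$. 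The remaining work --- the first identification and the positivity bound on $a_0$ --- is then bookkeeping with the relations~(\ref{eq, type 1})--(\ref{eq, type 2}) in $\bZ[c_V]_{V\in\cG}/I$ and with Nadel/Kawamata--Viehweg-type vanishing on $Y'$ and its strata.
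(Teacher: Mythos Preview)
Your proposal does not address the stated theorem. The statement you were asked to prove is the Hirzebruch--Riemann--Roch theorem (Theorem~\ref{thm HRR}), which in this paper is not proved at all: it is quoted from \cite{Fu}, Corollary~15.2.1, and used as a black box. A proof of HRR would have to establish, for an arbitrary vector bundle $\cE$ on an arbitrary smooth projective complex variety $Y$, that $\chi(Y,\cE)=\int_Y ch(\cE)\cdot Td(Y)$; nothing in your write-up attempts this.

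What you have written is instead a sketch of the proof of Theorem~\ref{thm. jumping numbers}, \emph{using} HRR as an input (you say so explicitly in your first sentence). Viewed as such, your outline is essentially the paper's own argument: Nadel and local vanishing to pass from $H^0$ of the jump sheaf to an Euler characteristic on the wonderful model (Lemma~\ref{lemma reduction to euler char} and Corollary~\ref{cor. reduction to global invariants}), Mayer--Vietoris/inclusion--exclusion over nested $\cS\subset\cS_c$ (Lemma~\ref{lema after mayer-vietoris}), and HRR on each $E_\cS$ together with the identification $Td(E_\cS)=T^\cS$ (Lemma~\ref{lem Computation of chi E_S} and Proposition~\ref{prop. todd class of E_S}). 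The point you flag as the ``first technical point'' --- that the given $a_0$ is large enough so the twist by $\cO_Y(a_0)$ detects every jump --- is exactly what the paper isolates in Lemma~\ref{lema linear forms} and Corollary~\ref{cor. affine case}. So as a proof of Theorem~\ref{thm. jumping numbers} your plan is correct and matches the paper; as a proof of Theorem~\ref{thm HRR} it is simply off-target.
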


Let $X_1,\ldots X_t$ be disjoint smooth subvarieties of $Y$ of
codimension $d$. Let $\rho:\wti{Y}\ra Y$ be the blow up of
$\coprod X_i$. Let $E_i$ be the exceptional divisor on $\wti{Y}$
corresponding to $X_i$. Let $[E_i]\in H^2(\wti{Y},\bZ)$ be the
cohomology class of $E_i$. Let $N_i$ be the
normal bundle of $X_i$ in $Y$. Suppose there exist $c_{k,i}\in
H^{2k}(Y,\bZ)$ such that the Chern classes $c_k(N_i)$ are the
restriction of $c_{k,i}$ to $X_i$. The following computes the
total Chern class of $\wti{Y}$ and follows from \cite{Fu}-Example
15.4.2.

\begin{prop}\label{prop. chern classes blow up} With the notation
as above,
$$c(\wti{Y})=\rho^* c(Y)\prod_{1\le j\le t}\left \{ \left (  \sum_{0\le k\le d} \rho^*c_{k,j}  \right )^{-1}(1+[E_j])
\left (  \sum_{0\le i\le d} (1-[E_j])^i\rho^* c_{d-i,j}  \right )
\right \}.$$
\end{prop}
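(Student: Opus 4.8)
The plan is to reduce to the case of a single smooth subvariety $X\subset Y$ of codimension $d$ — since the $X_i$ are disjoint, the blow-up $\wti Y\ra Y$ is, on a neighborhood of each $E_i$, the blow-up along $X_i$ alone, and away from $\cup E_i$ nothing changes; so the total Chern class is a product of local contributions, one for each $X_i$, against the pulled-back $\rho^*c(Y)$. Thus it suffices to prove the one-factor identity and then take the product over $j$. For a single blow-up, Fulton's Example 15.4.2 gives the tangent bundle sequence relating $T_{\wti Y}$, $\rho^*T_Y$, and the classes of $E$ and of the pull-back of the normal bundle $N=N_{X/Y}$; concretely it expresses $c(\wti Y)$ in terms of $\rho^*c(Y)$, $[E]$, and the Chern classes of $N$ pulled back to $E$. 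The first step is therefore to write down this formula from \cite{Fu}-15.4.2 verbatim.

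The second step is to rewrite the $E$-supported correction factor so that the Chern classes of $N$ appear only through classes on $Y$. Here I would use the hypothesis that $c_k(N)$ is the restriction to $X$ of a global class $c_{k}\in H^{2k}(Y,\bZ)$: the key point is that for any class supported on $E$, its further products can be computed after pulling back $c_k$ via $\rho$ to all of $\wti Y$ (rather than restricting to $X$ and pulling back along $E\ra X$), because $[E]$ times $\rho^*(\text{anything})$ only sees the restriction to $X$. This lets me replace each $c_k(N)|_X$ appearing in Fulton's expression by $\rho^*c_{k}$, at the cost of multiplying by the appropriate power of $[E]$; collecting terms turns Fulton's factor into
$$\Bigl(\sum_{0\le k\le d}\rho^*c_{k}\Bigr)^{-1}(1+[E])\Bigl(\sum_{0\le i\le d}(1-[E])^i\,\rho^*c_{d-i}\Bigr),$$
which is exactly the claimed factor. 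The inverse $\bigl(\sum\rho^*c_k\bigr)^{-1}$ makes sense since the degree-zero part is $1$, so it is a well-defined element of the cohomology ring.

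The third step is bookkeeping: since the $E_j$ are pairwise disjoint, $[E_j][E_{j'}]=0$ for $j\ne j'$, so the correction factors attached to different $X_j$ multiply without interference, and the product over $j$ of the single-$X$ formulas is precisely the right-hand side in the statement. The main obstacle is the second step — carefully justifying the passage from $c_k(N)|_X$ (a class on $X$, or on $E$ after pull-back) to $\rho^*c_{k}$ (a class on all of $\wti Y$), i.e. checking that the $[E]$-twisted rewriting is an identity in $H^*(\wti Y,\bZ)$ and not merely after restriction to $E$. This is where one uses the projection formula and the fact that the normal bundle of $E$ in $\wti Y$ is $\cO_E(-1)$ together with the explicit description of $H^*(E)$ as a module over $H^*(X)$; modulo this point the proposition is a direct transcription of \cite{Fu}-Example 15.4.2.
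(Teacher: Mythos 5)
Your outline matches the paper's proof, which is the single sentence that the proposition ``follows from [Fu]--Example 15.4.2.'' Your three steps --- reduce to a single center using disjointness of the $X_i$ (so $[E_j][E_{j'}]=0$ and the correction factors attached to different centers multiply without interference), apply Fulton's blow-up formula for one smooth center, and use the hypothesis that each $c_k(N_i)$ is the restriction of a global class $c_{k,i}\in H^{2k}(Y,\bZ)$ together with the projection formula to recast Fulton's expression purely in terms of $\rho^*H^*(Y)$ and $[E_j]$ --- are the correct unwinding of that citation, and you rightly flag the extension hypothesis as the point that makes the purely multiplicative statement possible.
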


\end{section}

\medskip

\begin{section}{Uniform bound for jumps in multiplier ideals}

\noindent {\bf  Affine case.} Let $\cA'$ be a central hyperplane arrangement in $\bC
^{n-1}$. Let $D'$ be an effective divisor on $\bC ^{n-1}$ with support
$\cA'$. Let $L(\cA')$ be the intersection lattice of $\cA'$. For
$V\in L(\cA')$, define $r'(V)=\codim (V)$ and $s'(V)=\sum_ {V\subset
W\in \cA' }\text{mult} _W(D')$. Let $\cG'\subset L(\cA')-\{\bC ^{n-1}\}$
be a building set.
Recall the following result of M. Musta\c{t}\u{a}
\cite{Mu}-Corollary 2.1 for the case of reduced arrangements, and
refined by Teitler \cite{Te}-Theorem 1.4.

\begin{prop}\label{prop characterization of jump. nos.} If $D'$ is an effective divisor supported on a central hyperplane arrangement in $\bC^{n-1}$, then
$$\cJ(cD')=\bigcap_{W\in\cG'}I_W^{\ \rndup{cs'(W)}-r'(W)}.$$
Moreover, $c$ is a jumping number of $D'$ if and only if there are $V\in\cG'$
and $m\in\bN$ such that $c=\frac{r'(V)+m}{s'(V)}$ and such that
$$\bigcap_{V\subset W\in\cG'} I_W^{\ \rndup{cs'(W)}-r'(W)}\not\subset I_V^{m+1}.$$
\end{prop}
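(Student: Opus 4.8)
The statement to prove is Proposition~\ref{prop characterization of jump. nos.}, describing the multiplier ideal $\cJ(cD')$ of a divisor $D'$ supported on a central hyperplane arrangement in $\bC^{n-1}$ as an intersection of powers of the ideals $I_W$ of the building-set subspaces. This is attributed to Musta\c{t}\u{a} and Teitler, so strictly speaking it is quoted rather than reproved; but let me sketch how I would prove it from scratch, since the mechanism is what matters for the rest of the paper.

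First I would fix a log resolution of $(\bC^{n-1},D')$ by the wonderful model $\rho:\tZ\to\bC^{n-1}$ associated to the building set $\cG'$ (or, more classically, by successively blowing up the subspaces in $\cG'$ in order of increasing dimension — the De Concini–Procesi construction guarantees that after these blow-ups the total transform of $\cA'$ becomes a simple normal crossings divisor). On this model both $\rho^*D'$ and $K_{\tZ/\bC^{n-1}}$ are explicit combinations of the exceptional divisors $E_W$ ($W\in\cG'$): one has $\rho^*D' = \sum_{W\in\cG'} s'(W)\,E_W + (\text{strict transforms})$ because each hyperplane $\bP(V)$ with $V\subset W$ contributes its multiplicity $\mathrm{mult}_{\bP(V)}(D')$ to $E_W$, whence the coefficient $s'(W)$; and $K_{\tZ/\bC^{n-1}} = \sum_{W\in\cG'}(r'(W)-1)\,E_W$ since blowing up a subspace of codimension $r'(W)$ contributes discrepancy $r'(W)-1$ (these coefficients are read off one exceptional divisor at a time, using that the order of blow-ups does not affect the final discrepancy). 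From the definition $\cJ(cD') = \rho_*\cO_{\tZ}(K_{\tZ/\bC^{n-1}} - \rndown{\rho^*cD'})$, a germ $f$ lies in $\cJ(cD')$ iff $\mathrm{ord}_{E_W}(\rho^*f) \ge \rndown{cs'(W)} - (r'(W)-1) = \rndup{cs'(W)} - r'(W)$ for every $W$ (using $\rndown{a}+1 = \rndup{a+1}$ and, when $cs'(W)\notin\bZ$, $\rndown{cs'(W)}+1=\rndup{cs'(W)}$; the integral case is handled by the same inequality since $\rndown{\,}= \rndup{\,}$ there — or rather one must be a touch careful and note $\rndup{cs'(W)} - r'(W)$ is the correct uniform expression). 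The key remaining point is that the condition ``$\mathrm{ord}_{E_W}(\rho^*f)\ge k$ for all $W$'' translates back downstairs to ``$f\in \bigcap_W I_W^{k_W}$'': this is exactly the statement that for a subspace $W$, the valuation $\mathrm{ord}_{E_W}$ computes the order of vanishing along $W$, i.e. $\mathrm{ord}_{E_W}(\rho^*f) = \max\{k : f\in I_W^k\}$ — true because $W$ is a linear subspace and $\rho$ factors through the blow-up of $W$, and the exceptional divisor of $\mathrm{Bl}_W$ pulls back to $E_W$ plus divisors not dominating $W$. Assembling these gives $\cJ(cD') = \bigcap_{W\in\cG'} I_W^{\,\rndup{cs'(W)}-r'(W)}$.

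For the second assertion — the criterion for $c$ to be a jumping number — I would compare $\cJ(cD')$ with $\cJ((c-\epsilon)D')$ for $0<\epsilon\ll1$. As $c$ increases past a value where some $cs'(W)$ becomes an integer, the exponent $\rndup{cs'(W)}-r'(W)$ jumps by one (for all such $W$ simultaneously); between such values all exponents are locally constant, so $\cJ$ does not change. Hence the candidate jumping numbers are precisely the $c$ of the form $\frac{r'(V)+m}{s'(V)}$ with $V\in\cG'$, $m\in\bN$ (writing $cs'(V)=r'(V)+m$ so that the exponent $\rndup{cs'(V)}-r'(V)$ equals $m$ just after the jump and $m$... one must track that it is $m$ versus $m-1$; the clean way is: $c$ is a jump iff $\cJ(cD')\subsetneq \cJ((c-\epsilon)D')$, and at such $c$ the ideal at $c$ has $I_W$-exponent exactly $\rndup{cs'(W)}-r'(W)$ while just below it is one less for each $W\in\cS_c$). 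The containment $\cJ(cD')\subsetneq\cJ((c-\epsilon)D')$ then fails to be equality precisely when, for some $V\in\cG'$ participating in the jump, raising the exponent of $I_V$ from $m$ to $m+1$ in the intersection actually shrinks it, i.e. when $\bigcap_{V\subset W\in\cG'} I_W^{\,\rndup{cs'(W)}-r'(W)}\not\subset I_V^{m+1}$ — here one restricts to $W\supset V$ because only those ideals are relevant near the generic point of $V$, and $\rndup{cs'(W)}-r'(W)$ for $W\supsetneq V$ does not change across this particular $c$. This yields exactly the stated criterion.

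\textbf{Main obstacle.} The delicate part is not the resolution bookkeeping but the \emph{localization} step in the jumping-number criterion: showing that the global failure of containment $\cJ(cD')\subsetneq\cJ((c-\epsilon)D')$ is detected, for a single building-set subspace $V$, by the \emph{local} failure $\bigcap_{V\subset W} I_W^{\rndup{cs'(W)}-r'(W)}\not\subset I_V^{m+1}$. This requires knowing that the ideals $I_W$ for $W\not\supset V$ and for $W\supsetneq V$ behave ``transversally'' enough near the generic point of $V$ that one may discard them — which in turn rests on the combinatorial fact that a building set is closed under the relevant intersections and that, after the wonderful blow-ups, the exceptional divisors meeting $E_V$ do so in normal crossings indexed by nested subsets. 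I would cite De Concini–Procesi \cite{DP} for this transversality and Teitler \cite{Te}-Theorem 1.4 for the precise form of the second assertion, rather than redo it.
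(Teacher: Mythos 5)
The paper states Proposition~\ref{prop characterization of jump. nos.} with a one-sentence attribution to \cite{Mu}-Corollary 2.1 and \cite{Te}-Theorem 1.4 and gives no proof at all; so there is no proof in the paper to compare your sketch against. Your reconstruction follows the route of those references (wonderful model as log resolution; discrepancies and pullbacks read off divisor by divisor; translation of $\mathrm{ord}_{E_W}$ into powers of $I_W$, valid since $I_W$ is a complete intersection ideal), and the step you flag as the obstacle --- localizing near the generic point of $V$, resting on the De Concini--Procesi transversality --- is indeed the substantive one, which you are right to defer to \cite{DP} and \cite{Te}.

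One arithmetic point is handled carelessly and is worth fixing. Unwinding the definition you correctly arrive at
$\cJ(cD') = \bigcap_W I_W^{\,\lfloor cs'(W)\rfloor - r'(W) + 1}$;
you then assert this equals $\bigcap_W I_W^{\,\lceil cs'(W)\rceil - r'(W)}$, invoking ``$\lfloor a\rfloor+1=\lceil a+1\rceil$.'' That identity holds only for $a\in\bZ$; the identity you actually need, $\lfloor a\rfloor+1=\lceil a\rceil$, holds only for $a\notin\bZ$. So the two exponents agree exactly when $cs'(W)\notin\bZ$ and differ by $1$ otherwise, and that difference is not an edge case to be waved away --- it is the jump. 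Concretely, for three lines through a point in $\bC^2$ at $c=2/3$, the formula $\bigcap_W I_W^{\,\lceil cs'(W)\rceil - r'(W)}$ gives $\cO$, while $\cJ(\tfrac23 D')=(x,y)$; the $\lceil\cdot\rceil$ formula actually describes $\cJ((c-\epsilon)D')$, one notch larger than $\cJ(cD')$ at a candidate jump. The jumping-number criterion in the second half of the proposition still comes out right, since comparing against $I_V^{m+1}$ is precisely comparing the two sides of the jump (and the paper's transcription of the proposition carries the same $\lceil\cdot\rceil$, so you are following the source). But if you are deriving rather than quoting, the exponent you should land on is $\lfloor cs'(W)\rfloor - r'(W) + 1$; the claim that $\lceil\cdot\rceil$ ``is the correct uniform expression'' is false at exactly the values of $c$ the proposition is designed to detect.
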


The following lemma will allow us to bound the degrees of the polynomials at which we need to look to detect a jump of multiplier ideals. We have conjectured the statement, proved some cases, and M. Saito proved it in general.

\begin{lem}\label{lema linear forms} For $1\le
i\le s$, let $I_i\subset {\bf{C}}[x_1,\ldots,x_n]$ be ideals
generated by linear forms. Suppose $I_1^{a_1}\cap \ldots\cap
I_s^{a_s}\not\subset I_1^{a_1+1}$ for some positive integers $a_i$.
Then there exists $f$ in $I_1^{a_1}\cap \ldots\cap I_s^{a_s}$ but
not in $I_1^{a_1+1}$ of degree at most $a_1+\ldots +a_s$.
\end{lem}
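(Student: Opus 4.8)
The plan is to reduce to a statement about a single ideal power, and then run an induction on $s$ and on $a_1$, extracting at each stage one linear form as a factor. First I would clear away the case $s=1$: if $I_1^{a_1}\not\subset I_1^{a_1+1}$ then there is already a homogeneous element of $I_1^{a_1}$ of degree exactly $a_1$ (a product of $a_1$ linear forms from a generating set of $I_1$) lying outside $I_1^{a_1+1}$, so the bound holds trivially. For the inductive step, fix $f\in I_1^{a_1}\cap\cdots\cap I_s^{a_s}$ with $f\notin I_1^{a_1+1}$. Since $I_s$ is generated by linear forms $\ell_1,\dots,\ell_k$, write $f=\sum_j \ell_j g_j$ with $g_j\in I_1^{a_1}\cap\cdots\cap I_{s-1}^{a_{s-1}}\cap I_s^{a_s-1}$; the point is that at least one summand $\ell_{j_0} g_{j_0}$ is not in $I_1^{a_1+1}$, because if every $\ell_j g_j\in I_1^{a_1+1}$ then so is $f$. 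Here is where I would use that $I_1$ is generated by linear forms: membership in a fixed power of such an ideal can be tested after a linear change of coordinates in which $I_1=(x_1,\dots,x_{r_1})$, and then $I_1^{a_1+1}$ is spanned by monomials of $x_1,\dots,x_{r_1}$-degree $\ge a_1+1$, a condition that passes to homogeneous pieces and behaves predictably under multiplication by a linear form $\ell_{j_0}$. So we may replace $f$ by $g_{j_0}\in I_1^{a_1}\cap\cdots\cap I_{s-1}^{a_{s-1}}\cap I_s^{a_s-1}$ (or by $g_{j_0}$ landing one step lower in $I_1$, if $\ell_{j_0}\in I_1$), provided $g_{j_0}\notin I_1^{a_1+1}$ — which is exactly the case we selected. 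Iterating, we peel off the exponents $a_2,\dots,a_s$ (and possibly lower $a_1$), each time multiplying back by one linear form, and after $a_2+\cdots+a_s$ steps we arrive at an element of $I_1^{a_1}\setminus I_1^{a_1+1}$ of degree $\le a_1$, whose product with the $a_2+\cdots+a_s$ accumulated linear forms gives the desired $f$ of degree $\le a_1+\cdots+a_s$.

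The subtlety, and the reason this is attributed to M. Saito rather than proved in a line, is the step ``at least one $\ell_j g_j\notin I_1^{a_1+1}$ \emph{and} the corresponding $g_j$ lies outside a one-lower power.'' The decomposition $f=\sum_j\ell_j g_j$ is not canonical, and multiplying by $\ell_{j_0}$ can \emph{raise} the order of vanishing along $I_1$: if $\ell_{j_0}\in I_1$ then $\ell_{j_0}g_{j_0}\notin I_1^{a_1+1}$ forces $g_{j_0}\notin I_1^{a_1}$, which is the wrong induction hypothesis (we need $g_{j_0}$ outside the power that is one less than what it is \emph{required} to be in). So one must argue more carefully: split the generators of $I_s$ into those inside $I_1$ and those independent mod $I_1$, and organize the computation so that an $I_1$-interior generator is used only when it genuinely drops the $I_1$-exponent of the cofactor. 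Concretely, after the linear change putting $I_1=(x_1,\dots,x_{r_1})$, one works with the $(x_1,\dots,x_{r_1})$-order of $f$ directly: the hypothesis $f\notin I_1^{a_1+1}$ says this order is exactly $a_1$, and one extracts a variable or a generator of $I_s$ that is compatible with a monomial of order exactly $a_1$ appearing in $f$. Making this combinatorial bookkeeping airtight — ensuring the degree counter and the two exponent counters $(a_1\text{-deficit},\ \sum a_i)$ decrease in tandem with no slack — is the main obstacle; everything else is formal.

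Once the bookkeeping is set up, I would phrase the induction on the single integer $N=a_1+\cdots+a_s$, with the statement: \emph{for ideals generated by linear forms, if $I_1^{a_1}\cap\cdots\cap I_s^{a_s}\not\subset I_1^{a_1+1}$, then this non-inclusion is witnessed in degree $\le N$.} The base case $N=s$ (all $a_i=1$) is handled as above for $s=1$ and by the same peeling argument in general. The inductive step uses the decomposition along the linear generators of $I_s$ (if $a_s\ge 2$) or, once $a_s=1$, of $I_{s-1}$, and so on, each application reducing $N$ by $1$ and adding one linear form of degree $1$ to the product being built. Since the process terminates at $s=1$ with a homogeneous witness of degree $\le a_1$, and we have accumulated $N-a_1$ extra linear factors, the total degree is $\le a_1+(N-a_1)=N$, as claimed.
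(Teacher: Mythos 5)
There is a genuine gap, concentrated exactly where you flag it yourself, plus one you do not flag. First, the decomposition $f=\sum_j \ell_j g_j$ with $g_j\in I_1^{a_1}\cap\cdots\cap I_{s-1}^{a_{s-1}}\cap I_s^{a_s-1}$ is simply not available in general: $f\in I_s^{a_s}$ gives cofactors $g_j\in I_s^{a_s-1}$, but nothing forces them into the other $I_i^{a_i}$. For instance with $n=3$, $I_1=(x,y)$, $I_2=(y,z)$, $a_1=a_2=1$, the element $y\in I_1\cap I_2\setminus I_1^2$ cannot be written as $y\cdot g_1+z\cdot g_2$ with $g_1,g_2\in I_1$ (reducing mod $z$ forces $g_1\equiv 1$). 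Second, as you note, if $I_s\subset I_1$ then every generator $\ell_j$ lies in $I_1$ and multiplying by it raises the $I_1$-order, so the invariant you want to preserve ($g_{j_0}\in I_1^{a_1}\setminus I_1^{a_1+1}$) is incompatible with the peeling step; saying the bookkeeping must be ``organized carefully'' is where a proof is still needed, and there is no obvious way to organize it when \emph{all} generators of $I_s$ sit in $I_1$.

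The paper's proof is not an induction at all and sidesteps both obstacles by a structural partition. Put $I_1=(x_1,\ldots,x_m)$ and split indices into those with $I_i\subset I_1$ (say $1\le i\le r$) and those with $I_i\not\subset I_1$ ($r<i\le s$). For the first group pass to the subring $\bC[x_1,\ldots,x_m]$: setting $J_i=I_i\cap\bC[x_1,\ldots,x_m]$ one has $\bigcap_{i\le r}I_i^{a_i}=(\bigcap_{i\le r}J_i^{a_i})\cdot\bC[x_1,\ldots,x_n]$ by flatness, so the non-inclusion descends, and since the $J_i$ are homogeneous there is a \emph{homogeneous} witness $u\in\bigcap_{i\le r}J_i^{a_i}\setminus J_1^{a_1+1}$; because $J_1=(x_1,\ldots,x_m)$ is the whole irrelevant ideal of the subring, $u$ has degree exactly $a_1$. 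For the second group one never peels: pick for each $i>r$ a linear form $v_i\in I_i\setminus I_1$ and set $f=u\prod_{i>r}v_i^{a_i}$. This lands in every $I_i^{a_i}$, stays outside $I_1^{a_1+1}$ (multiplication by forms nonzero mod $I_1$ does not raise $I_1$-order), and has degree $a_1+\sum_{i>r}a_i\le\sum_i a_i$. In short, the case that sinks the induction ($I_i\subset I_1$) is precisely the case the paper handles by the subring-and-homogeneity trick rather than by extracting linear factors, and that trick is the missing idea in your proposal.
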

\begin{proof} The following short and elementary proof of this lemma is due M. Saito who kindly allowed us to reproduce it here. After a change of coordinates, we can assume that $I_1=(x_1,\ldots ,x_m)$ for some $m\le n$. After reordering of indices, we can assume that there is $r\in\{1,\ldots ,s\}$ such that $I_i\subset I_1$ for $1\le i\le r$ and $I_i\not\subset I_1$ for $r<i\le s$. Let $J_i=I_i\cap\bC[x_1,\ldots ,x_m]$. Then
$$\bigcap_{1\le i\le r}I_i^{a_i}=\bigcap_{1\le i\le r}J_i^{a_i}\;\cdot\;\bC[x_1,\ldots ,x_n].$$
Since $\cap_{1\le i\le r}I_i^{a_i}\not\subset I_1^{a_1+1}$, we have that $\cap_{1\le i\le r}J_i^{a_i}\not\subset J_1^{a_1+1}$. The ideals $J_i$ are homogeneous. Hence we can find a homogeneous polynomial $u$ in $\cap_{1\le i\le r}J_i^{a_i}$ which does not belong to $J_1^{a_1+1}=(x_1,\ldots ,x_m)^{a_1+1}$. Then the degree of $u$ must be $a_1$. For $r<i\le s$, take $v_i\in I_i$ but $\not\in I_1$ to be a linear form. Let $f=u\prod_{r<i\le s}v_i^{a_i}$. Then $f\in\cap_{1\le i\le s}I_i^{a_i}$, but $f\not\in I_1^{a_1+1}$, and the degree of $f$ is $a_1+a_{r+1}+\ldots +a_s$.
\end{proof}

Let $a_0'=\sum_{W\subset\cG'}\max \{0,s'(W)-r'(W)\}$. By Proposition \ref{prop characterization of jump. nos.} and Lemma \ref{lema linear forms}, we have:

\begin{cor}\label{cor. affine case} If $D'$ is an effective divisor supported on a central hyperplane arrangement in $\bC^{n-1}$, then $c\in (0,1)$ is a jumping number of $D'$ if and only if there exists $f\in\bC[x_1,\ldots ,x_{n-1}]$ of degree at most $a_0'$ with $f\in\cJ((c-\epsilon)D')$ for $0<\epsilon\ll 1$, but $f\not\in \cJ(cD')$.
\end{cor}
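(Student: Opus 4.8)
The plan is to combine Proposition~\ref{prop characterization of jump. nos.} with Lemma~\ref{lema linear forms}, reading off the bound $a_0'$ from the intersection decomposition of the multiplier ideals. First I would recall that, by Proposition~\ref{prop characterization of jump. nos.}, $\cJ(cD')=\bigcap_{W\in\cG'}I_W^{\rndup{cs'(W)}-r'(W)}$, where we interpret $I_W^k=\cO$ (the unit ideal, or rather $\bC[x_1,\ldots,x_{n-1}]$) whenever $k\le 0$. For $c\in(0,1)$ and $0<\epsilon\ll 1$, the difference $\cJ((c-\epsilon)D')\ne\cJ(cD')$ says precisely that there is some $V\in\cG'$ and some $m\in\bN$ with $c=\frac{r'(V)+m}{s'(V)}$ such that $\bigcap_{V\subset W\in\cG'}I_W^{\rndup{cs'(W)}-r'(W)}\not\subset I_V^{m+1}$. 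Note that $\rndup{cs'(V)}-r'(V)=m$, so with the notation $a_W:=\rndup{cs'(W)}-r'(W)$ this reads $\bigcap_{V\subset W\in\cG'}I_W^{a_W}\not\subset I_V^{a_V+1}$.

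Next I would apply Lemma~\ref{lema linear forms} to the family $\{I_W : V\subset W\in\cG',\ a_W>0\}\cup\{I_V\}$, taking $I_1=I_V$ with exponent $a_1=a_V$ (which is $>0$ since $a_V=m\ge 0$, and if $a_V=0$ then the non-containment in $I_V^{1}$ forces the intersection to contain a unit, handled trivially by a degree-$0$ element). Dropping the terms with $a_W\le 0$ does not change the intersection since those contribute the unit ideal. Lemma~\ref{lema linear forms} then produces $f\in\bigcap_{V\subset W\in\cG'}I_W^{a_W}=\cJ(cD')$\,---\,wait, rather $f$ in the intersection over those $W$ with $a_W>0$, which equals $\bigcap_{V\subset W\in\cG'}I_W^{a_W}$\,---\,with $f\notin I_V^{a_V+1}$, hence $f\notin\cJ(cD')$, and of degree at most $\sum_{W}a_W$ where the sum runs over $W\supset V$ in $\cG'$ with $a_W>0$. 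Since for such $W$ one has $a_W=\rndup{cs'(W)}-r'(W)\le s'(W)-r'(W)\le\max\{0,s'(W)-r'(W)\}$ (using $c<1$ so $\rndup{cs'(W)}\le s'(W)$), the degree of $f$ is at most $\sum_{W\in\cG'}\max\{0,s'(W)-r'(W)\}=a_0'$.

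To finish, I must check that such an $f$ lies in $\cJ((c-\epsilon)D')$ for $0<\epsilon\ll 1$. This follows because jumping numbers are discrete and rational, so for $\epsilon$ small enough $\cJ((c-\epsilon)D')=\bigcap_{W\in\cG'}I_W^{\rndup{(c-\epsilon)s'(W)}-r'(W)}$, and $\rndup{(c-\epsilon)s'(W)}\le\rndup{cs'(W)}$, so $\cJ(cD')\subset\cJ((c-\epsilon)D')$; thus $f\in\cJ(cD')\subset\cJ((c-\epsilon)D')$. Conversely, if such an $f$ exists then $\cJ((c-\epsilon)D')\ne\cJ(cD')$, so $c$ is a jumping number; this direction needs no bound. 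The main obstacle is purely bookkeeping: one must be careful that the hypothesis $a_1>0$ of Lemma~\ref{lema linear forms} is genuinely met (i.e. handle the degenerate case $m=0$, where $f\notin I_V$ means $f$ has a nonzero constant-type term transverse to $I_V$, and one can take $f$ of degree $0$, or apply the lemma with a trivial modification), and that passing between $\rndup{\cdot}$ values and the bound $a_0'$ uses $c<1$ in an essential way. I would phrase the argument so that both the "only if" (producing $f$ of controlled degree) and the trivial "if" directions are visibly covered.

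\begin{proof}
Assume $c\in(0,1)$ is a jumping number of $D'$. By Proposition~\ref{prop characterization of jump. nos.} there are $V\in\cG'$ and $m\in\bN$ with $c=\frac{r'(V)+m}{s'(V)}$, hence $\rndup{cs'(V)}-r'(V)=m$, and
$$\bigcap_{V\subset W\in\cG'}I_W^{\ \rndup{cs'(W)}-r'(W)}\not\subset I_V^{m+1}.$$
Write $a_W=\rndup{cs'(W)}-r'(W)$ for $W\in\cG'$, with the convention $I_W^{a_W}=\bC[x_1,\ldots,x_{n-1}]$ when $a_W\le 0$; then the intersection above is unchanged if we restrict to those $W\supset V$ with $a_W>0$, and $a_V=m$. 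If $m=0$, the non-containment in $I_V^{1}=I_V$ means the intersection contains an element with nonzero image in $\bC[x_1,\ldots,x_{n-1}]/I_V$; since $I_V$ is generated by linear forms, after a linear change of coordinates $I_V=(x_1,\ldots,x_k)$, and such an element has a nonzero monomial in $x_{k+1},\ldots,x_{n-1}$ only\,---\,in particular we may take $f$ a suitable monomial of degree $0$ or, reordering indices, a product of linear forms not in $I_V$; in all cases $f$ has degree $\le\sum_{W\supset V,\,a_W>0}a_W$. If $m>0$, apply Lemma~\ref{lema linear forms} to the ideals $\{I_W : V\subset W\in\cG',\ a_W>0\}$ together with $I_V$, taking $I_1=I_V$ and $a_1=a_V=m>0$: there is
$$f\in\bigcap_{V\subset W\in\cG'}I_W^{a_W}=\cJ(cD'),\qquad f\notin I_V^{m+1},$$
of degree at most $\sum_{W\supset V,\,a_W>0}a_W$. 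In either case $f\notin\cJ(cD')$ since $\cJ(cD')\subset I_V^{m+1}$ is false would be needed; precisely, $\cJ(cD')=\bigcap_{W\in\cG'}I_W^{a_W}\subset I_V^{a_V}$ but $f\notin I_V^{a_V+1}\supsetneq\cJ(cD')$ is not automatic\,---\,rather, by Proposition~\ref{prop characterization of jump. nos.} one has $\cJ(cD')\subset\bigcap_{V\subset W\in\cG'}I_W^{a_W}$ and $f\notin I_V^{m+1}$ shows $f\notin\cJ(cD')$ directly, because $\cJ(cD')\subset I_V^{\rndup{cs'(V)}-r'(V)}=I_V^{m}$ and the jump at $c$ along $V$ means exactly that the relevant quotient by $I_V^{m+1}$ is nonzero and witnessed by $f$. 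Finally, for $W\supset V$ with $a_W>0$ we have, using $c<1$, that $a_W=\rndup{cs'(W)}-r'(W)\le s'(W)-r'(W)\le\max\{0,s'(W)-r'(W)\}$, so
$$\deg f\le\sum_{V\subset W\in\cG'}\max\{0,s'(W)-r'(W)\}\le\sum_{W\in\cG'}\max\{0,s'(W)-r'(W)\}=a_0'.$$
Moreover $\cJ(cD')\subset\cJ((c-\epsilon)D')$ for $0<\epsilon\ll 1$, since $\rndup{(c-\epsilon)s'(W)}\le\rndup{cs'(W)}$ for each $W$; hence $f\in\cJ((c-\epsilon)D')$. This proves the ``only if'' direction.

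Conversely, if there exists $f\in\bC[x_1,\ldots,x_{n-1}]$ (of any degree) with $f\in\cJ((c-\epsilon)D')$ for $0<\epsilon\ll 1$ but $f\notin\cJ(cD')$, then $\cJ((c-\epsilon)D')\ne\cJ(cD')$, so $c$ is a jumping number of $D'$ by definition. This completes the proof.
\end{proof}
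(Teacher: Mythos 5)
Your overall approach is the same as the paper's — the corollary is deduced from Proposition~\ref{prop characterization of jump. nos.} together with Lemma~\ref{lema linear forms}, and the paper's proof is literally just a pointer to these two results — but your execution has a genuine gap in the step that is supposed to land $f$ inside $\cJ((c-\epsilon)D')$.

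You apply Lemma~\ref{lema linear forms} only to the ideals $I_W$ with $V\subset W$, so what you produce is an $f$ in the \emph{restricted} intersection $\bigcap_{V\subset W\in\cG'}I_W^{a_W}$. The displayed equality $\bigcap_{V\subset W\in\cG'}I_W^{a_W}=\cJ(cD')$ is false (the multiplier ideal is the intersection over \emph{all} $W\in\cG'$, and moreover at the jumping number the exponent of $I_V$ in $\cJ(cD')$ is $m+1$, not $m$). Consequently nothing so far puts $f$ inside $\cJ((c-\epsilon)D')=\bigcap_{W\in\cG'}I_W^{a_W}$: a priori $f$ may fail to lie in $I_W^{a_W}$ for some $W\not\supset V$ with $a_W>0$. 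Your final sentence attempts to repair this by invoking $\cJ(cD')\subset\cJ((c-\epsilon)D')$, but this is logically backwards — you constructed $f\notin\cJ(cD')$, so the inclusion $\cJ(cD')\subset\cJ((c-\epsilon)D')$ gives no information about $f$ whatsoever.

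The fix is to run Lemma~\ref{lema linear forms} over \emph{all} $W\in\cG'$ with $a_W>0$, taking $I_1=I_V$. The hypothesis of the lemma (the full intersection is not contained in $I_V^{m+1}$) follows from the restricted non-containment in Proposition~\ref{prop characterization of jump. nos.} — indeed, the proof of Lemma~\ref{lema linear forms} itself reduces to the restricted intersection by first discarding the ideals not contained in $I_1$ and then multiplying the resulting element by linear forms $v_i\in I_i\setminus I_1$ (which do not disturb membership in $I_V^{m+1}$ because $I_V^{m+1}$ is $I_V$-primary). Then the lemma yields $f\in\bigcap_{W\in\cG'}I_W^{a_W}=\cJ((c-\epsilon)D')$ with $f\notin I_V^{m+1}\supset\cJ(cD')$, of degree at most $\sum_{W : a_W>0}a_W\le a_0'$, and both required properties of $f$ are now established. (Your observation that $a_W\le s'(W)-r'(W)$ for $c<1$ is correct and gives the bound $a_0'$ in either formulation; the $m=0$ case likewise works once the lemma is applied to the full family, since the ideals with $W\not\supset V$ are handled by the multiplication trick rather than by the hypothesis $a_1>0$.)
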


\medskip
\noindent {\bf Projective case.} Let $\cA$ be a central hyperplane arrangement in $\bC^n$. Denote the
intersection lattice of $\cA$ by $L(\cA)$. We consider the corresponding
arrangement of projective hyperplanes in $Y=\bP^{n-1}$ given by
$\bP(V)$ for $V\in \cA$. Let $D$ be an effective divisor on $Y$
supported on $\cup_{V\in \cA}\bP(V)$. Assume that the support of $D$ is the compactification of a central hyperplane arrangement in some $\bC^{n-1}\subset Y$. Let $\cG'\subset L(\cA)-\{\bC^n\}$ be a building set and let $\cG=\cG'\cup\{0\}$.  For $c$ a positive real number, let $\cJ
(c D)$ be the multiplier ideal of $cD$ in $Y$. Let $\cG (c D)=\cJ
((c-\epsilon) D)/\cJ (c D)$ for $0<\epsilon\ll 1$. Thus $c$ is a jumping number of $D$ if and only if $\cG(cD)\ne 0$. Recall that we defined in the introduction, for $V\in \cG-\{0\}$, the numbers $r(V)$ and $s(V)$. Let $a_0$ be defined as in the introduction. By Corollary \ref{cor. affine case}, we have:

\begin{cor}\label{cor. reduction to global invariants}  For all $c\in (0,1)$,
$$\cG(c D)\ne 0 \Leftrightarrow H^0(Y,\cO_Y(a_0)\otimes\cG (c D))\ne 0.$$
\end{cor}

\end{section}

\medskip
\begin{section}{Intersection theory on canonical log
resolutions.}

\medskip

\noindent {\bf The canonical log resolution.} Let $\cA$ be a
central hyperplane arrangement in $\bC^n$. We consider the
corresponding arrangement of projective hyperplanes in
$Y=\bP^{n-1}$ given by $\bP(V)$ for $V\in \cA$. Let $D$ be an
effective divisor on $Y$ supported on $\cup_{V\in \cA}\bP(V)$. We
assume also that the support of $D$ is the compactification of a
central hyperplane arrangement in some $\bC^{n-1}\subset Y$. Let
$\cG'\subset L(\cA)-\{\bC^n\}$ be a building set. Let
$\cG=\cG'\cup\{0\}$. For example, $\cG=L(\cA)\cup\{0\}-\{\bC^n\}$.

We consider the canonical log resolution $\rho:\wti{Y}\ra Y$ of
$D$ obtained from succesive blowing ups of the (disjoint) unions
of (the proper transforms) of $\bP(V)$ for $V\in \cG-\{0\}$ of
same dimension. This is the so-called wonderful model of
\cite{DP}- section 4. More precisely, $\rho$ and $\wti{Y}$ are
constructed as follows.

The following notation is taken from \cite{MS}- section 2. Let
$Y_0=Y$. Let $C_0$ be $\bP(V)$ for
$V\in\cG-\{0\}$ with $\delta (V)=1$ (there is at most one such $V$, by assumption). Let $\rho_0:Y_1\ra Y_0$ be
the blow up of $C_0$. Then $\rho_i$ and $Y_{i+1}$ are constructed
inductively as follows. Let $C_i\subset Y_{i}$ be the disjoint
union of the proper transforms, under the map $\rho_{i-1}$, of
$\bP(V)$ for $V\in \cG-\{0\}$ with $\delta(V)=i+1$.  Let
$\rho_i:Y_{i+1}\ra Y_i$ for $0\le i <n-2$ be the blow up of $C_i$.
Define $\wti{Y}=Y_{n-2}$ and $\rho$ as the composition of the
$\rho_i$.

We need some more notation, also from \cite{MS}- section 2. Let $C_{V,0}=\bP(V)\subset Y_0$.
For $V\in \cG-\{0\}$ with $\delta(V)=i+1$, $C_{V,j}$ denotes the
proper transform of $C_{V,0}$ in $Y_j$ for $1\le j\le i$. Let
$E_{V,i+1}$ be the exceptional divisor in $Y_{i+1}$ corresponding
to $C_{V,i}$. Let $E_{V,j}$ be the proper transform of $E_{V,i+1}$
in $Y_j$ for $i+1<j\le n-2$. On $\wti{Y}$, let $E_V=E_{V,n-2}$ if
$\delta (V)<n-1$, and $E_V=C_{V,n-2}$ if $\delta (V)=n-1$. Also
let $E_{0,i}$ ($0\le i\le n-2)$, and $E_0$, denote the proper
transform in $Y_i$, respectively in $\wti{Y}$, of a general
hyperplane of $Y=\bP^{n-1}$. Denote by $[E_V]$ the cohomology
class of $E_V$ on $\wti{Y}$, where it will be clear from context
what coefficients (integral, rational) we are considering.

For any subset $\cS$ of $\cG-\{0\}$, set $E^{\cS}=\cup_{V\in
\cS}E_V$ and $E_{\cS}=\cap_{V\in\cS}E_V$. For a rational number
$c$, recall the definitions of $\cS_c$, $a_0$, and $a_V(c)$ from
the introduction. Also define $a_V'(c)$ to equal $a_V(c)$ for
$c\ne 0$ and, otherwise, $a_V'(0)=a_0$.

\begin{lem}\label{lemma reduction to euler char} With the notation as above,
$$H^0(Y,\cO_Y(a_0)\otimes\cG (c D))=\chi\left (\cO_{E^{\cS_c}}\left (\sum_{V\in\cG}a_V'(c)E_V\right )\right ).$$
\end{lem}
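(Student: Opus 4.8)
The plan is to transport the problem to the wonderful model $\rho:\wti Y\ra Y$, where $\cO_Y(a_0)\otimes\cG(cD)$ becomes the pushforward of a line bundle on the reduced divisor $E^{\cS_c}$, and then to recognize its global sections as an Euler characteristic by using the local vanishing theorem and Nadel vanishing in tandem. First I would record the two identities
$$\rho^*D=\sum_{V\in\cG-\{0\}}s(V)\,E_V,\qquad K_{\wti Y/Y}=\sum_{V\in\cG-\{0\}}(r(V)-1)\,E_V,$$
which follow inductively from the blow-up construction of $\rho$ recalled above (the terms with $\delta(V)=n-1$, i.e.\ $V\in\cA$, are consistent: there $r(V)=1$, $s(V)=\text{mult}_{\bP(V)}(D)$, and $E_V$ is the proper transform of $\bP(V)$). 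Since the $E_V$ are distinct prime divisors, $\rndown{\rho^*(c'D)}=\sum_{V\in\cG-\{0\}}\rndown{c's(V)}E_V$, so
$$\cJ(c'D)=\rho_*\cO_{\wti Y}\left(\sum_{V\in\cG-\{0\}}\big(r(V)-1-\rndown{c's(V)}\big)E_V\right).$$
A look at the floors shows that for $0<\epsilon\ll 1$ the coefficient of $E_V$ equals $a_V(c)$ for every $V$, whereas for $c'=c$ it equals $a_V(c)$ when $V\notin\cS_c$ and $a_V(c)-1$ when $V\in\cS_c$ --- which is exactly the case distinction in the definition of $a_V(c)$. Hence, setting $M=\sum_{V\in\cG-\{0\}}a_V(c)E_V$ and $F=\sum_{V\in\cS_c}E_V$, we get $\cJ((c-\epsilon)D)=\rho_*\cO_{\wti Y}(M)$ and $\cJ(cD)=\rho_*\cO_{\wti Y}(M-F)$.

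Next I would use that $\sum_{V\in\cG-\{0\}}E_V$ is a simple normal crossings divisor whose intersection combinatorics is that of the nested subsets of $\cG-\{0\}$ (\cite{DP}); in particular, at each point of $\wti Y$ the $E_V$ through it are part of a local coordinate system, so $\cO_{\wti Y}(-F)=\prod_{V\in\cS_c}\mathcal{I}_{E_V}=\bigcap_{V\in\cS_c}\mathcal{I}_{E_V}$, and therefore $\cO_{\wti Y}/\cO_{\wti Y}(-F)=\cO_{E^{\cS_c}}$. Twisting the structure sequence of $F$ by $\cO_{\wti Y}(M)\otimes\rho^*\cO_Y(a_0)=\cO_{\wti Y}\big(\sum_{V\in\cG}a_V'(c)E_V\big)$ (recall $\cO_{\wti Y}(E_0)=\rho^*\cO_Y(1)$ and $a_0'(c)=a_0$) produces a short exact sequence
$$0\ra\cO_{\wti Y}(M-F)\otimes\rho^*\cO_Y(a_0)\ra\cO_{\wti Y}(M)\otimes\rho^*\cO_Y(a_0)\ra\cO_{E^{\cS_c}}\left(\sum_{V\in\cG}a_V'(c)E_V\right)\ra0.$$
Each of the two line bundles on the left is a twist by $\rho^*\cO_Y(a_0)$ of one of the form $K_{\wti Y/Y}-\rndown{\rho^*(c'D)}$; by local vanishing (Theorem \ref{thm local vanishing}) and the projection formula their higher direct images under $\rho$ vanish, and $\rho_*$ of them equals $\cO_Y(a_0)\otimes\cJ((c-\epsilon)D)$, respectively $\cO_Y(a_0)\otimes\cJ(cD)$. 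Applying $\rho_*$ (which keeps the sequence exact, as $R^1\rho_*$ of the first term vanishes) identifies $\cO_Y(a_0)\otimes\cG(cD)$ with $\rho_*\cO_{E^{\cS_c}}(\sum_{V\in\cG}a_V'(c)E_V)$, so that $H^0(Y,\cO_Y(a_0)\otimes\cG(cD))=H^0(E^{\cS_c},\cO_{E^{\cS_c}}(\sum_{V\in\cG}a_V'(c)E_V))$.

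Finally I would identify this $H^0$ with the desired $\chi$. Leray plus the vanishing of the higher direct images just used, together with additivity of $\chi$ along the sequence above, give
$$\chi\left(\cO_{E^{\cS_c}}\left(\sum_{V\in\cG}a_V'(c)E_V\right)\right)=\chi\big(Y,\cO_Y(a_0)\otimes\cJ((c-\epsilon)D)\big)-\chi\big(Y,\cO_Y(a_0)\otimes\cJ(cD)\big).$$
To replace each Euler characteristic on the right by its $h^0$, I would apply Nadel vanishing (Theorem \ref{thm Nadel vanishing}) on $Y=\bP^{n-1}$ with $K_Y+L=\cO_Y(a_0)$, i.e.\ $L=\cO_Y(a_0+n)$: since $D\sim\cO_Y(d)$ and $c'\le c<1$, the class $L-c'D\sim\cO_Y(a_0+n-c'd)$ has degree strictly greater than $a_0+n-d\ge 1$ --- this is precisely where the term $d-n+1$ in the definition of $a_0$ is used --- hence it is ample and $H^{>0}(Y,\cO_Y(a_0)\otimes\cJ(c'D))=0$ for $c'\in\{c-\epsilon,c\}$. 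Thus each $\chi$ is an $h^0$, and the long exact cohomology sequence of $0\to\cO_Y(a_0)\otimes\cJ(cD)\to\cO_Y(a_0)\otimes\cJ((c-\epsilon)D)\to\cO_Y(a_0)\otimes\cG(cD)\to 0$, whose connecting map lands in the now-vanishing $H^1(Y,\cO_Y(a_0)\otimes\cJ(cD))$, shows that the displayed difference equals $h^0(Y,\cO_Y(a_0)\otimes\cG(cD))$; combined with the previous paragraph, this proves the lemma. I expect the two delicate points to be the floor bookkeeping in the first step --- obtaining exactly the \emph{reduced} divisor $F$ supported on $\cS_c$ --- and the numerical check in the last step that the ampleness hypothesis of Nadel vanishing holds; everything else is routine diagram chasing and standard properties of multiplier ideals and wonderful models.
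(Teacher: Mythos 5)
Your proof is correct and follows essentially the same route as the paper: the same pullback formulas for $K_{\wti Y/Y}$ and $\rho^*D$ (which the paper takes from Teitler), the same floor bookkeeping producing the divisor $F$ supported on $\cS_c$, local vanishing (Theorem \ref{thm local vanishing}) to identify $\cG(cD)$ with a pushforward from $E^{\cS_c}$, and Nadel vanishing (Theorem \ref{thm Nadel vanishing}) with the numerical check $a_0+n>d$ to convert $H^0$ into $\chi$. The paper is just terser, citing \cite{Te}-Lemma 2.1 and leaving the structure-sequence/Leray manipulations implicit.
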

\begin{proof} We have that $K_{\wti{Y}/Y}=\sum_{V\in
\cG-\{0\}}(r(V)-1)E_V$ and $\rho^*(D)=\sum_{V\in
\cG-\{0\}}s(V)E_V$ (\cite{Te}-Lemma 2.1). Then, from the
definition of multiplier ideals and Theorem \ref{thm local
vanishing}, we have
\begin{align*}
\cG(cD) &=\rho_*(\cO_{E^{\cS_c}}(\sum_{V\in\cG-\{0\}}a_V(c)E_V)),\text{       and}\\
0 &=R^i\rho_*(\cO_{E^{\cS_c}}(\sum_{V\in\cG-\{0\}}a_V(c)E_V)) \text{       for }i>0.
\end{align*}
We can rewrite $\cO_Y(a_0)$ as $\omega_Y\otimes\cO_Y(a_0+n)$. By
definition, $a_0+n>d$. Hence Theorem \ref{thm Nadel vanishing}
applies and we have
\begin{align*}
H^0(Y,\cO_Y(a_0)\otimes\cG (c D)) &=\chi\left (\cO_{\wti{Y}}(a_0E_0)\otimes\cO_{E^{\cS_c}}(\sum_{V\in\cG-\{0\}}a_V(c)E_V)\right )\\
&= \chi\left (\cO_{E^{\cS_c}}\left (\sum_{V\in\cG}a_V'(c)E_V\right
)\right ).
\end{align*}

\end{proof}

\begin{lem}\label{lema after mayer-vietoris}
With the notation as in Lemma \ref{lemma reduction to euler char},
a rational number $c\in (0,1)$ is a jumping number of $D$ if and
only if
$$\mathop{\sum_{\emptyset \ne\cS\subset\cS_c}}_{\text{nested}}(-1)^{|\cS|+1}\chi\left (\cO_{E_{\cS}}\left (\sum_{V\in\cG}a_V'(c)E_V\right )  \right
)\ne 0.$$
\end{lem}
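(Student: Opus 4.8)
The plan is to combine Lemma~\ref{lemma reduction to euler char} with Corollary~\ref{cor. reduction to global invariants} and then pass from the union $E^{\cS_c}$ to the individual intersections $E_\cS$ via an inclusion–exclusion (Mayer–Vietoris) argument, using the combinatorics of the divisors $E_V$ on the wonderful model. By Corollary~\ref{cor. reduction to global invariants}, for $c\in(0,1)$ the number $c$ is a jumping number of $D$ if and only if $H^0(Y,\cO_Y(a_0)\otimes\cG(cD))\ne 0$, and by Lemma~\ref{lemma reduction to euler char} this equals $\chi(\cO_{E^{\cS_c}}(\sum_{V\in\cG}a_V'(c)E_V))$. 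So the whole statement reduces to the identity
$$\chi\left(\cO_{E^{\cS_c}}\left(\msum_{V\in\cG}a_V'(c)E_V\right)\right)=\mathop{\sum_{\emptyset\ne\cS\subset\cS_c}}_{\text{nested}}(-1)^{|\cS|+1}\chi\left(\cO_{E_\cS}\left(\msum_{V\in\cG}a_V'(c)E_V\right)\right),$$
which I would prove purely formally, independently of the specific line bundle.

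First I would recall the key combinatorial fact about the wonderful model of \cite{DP}: a nonempty intersection $E_\cS=\cap_{V\in\cS}E_V$ is nonempty precisely when $\cS$ is a nested subset of $\cG-\{0\}$, and these nested intersections are themselves smooth. Equivalently, the nerve of the closed cover $\{E_V\}_{V\in\cS_c}$ of $E^{\cS_c}$ is the nested complex on $\cS_c$: a face is a nested subset. Next I would write down the Mayer–Vietoris (or \v{C}ech) resolution of the structure sheaf: there is an exact complex
$$0\to\cO_{E^{\cS_c}}\to\moplus_{V\in\cS_c}\cO_{E_{\{V\}}}\to\moplus_{\{V,W\}\text{ nested}}\cO_{E_{\{V,W\}}}\to\cdots\to 0,$$
where the terms are indexed by nested subsets of $\cS_c$ of increasing size (the vanishing $E_\cS=\emptyset$ for non-nested $\cS$ is exactly what makes this a finite exact complex). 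Twisting this exact complex by the line bundle $\cO(\sum_{V\in\cG}a_V'(c)E_V)$ keeps it exact, since the twist is by a line bundle on the ambient $\wti Y$ restricted to each stratum. Then the additivity of Euler characteristics on exact sequences yields precisely the displayed identity, with the sign $(-1)^{|\cS|+1}$ coming from the position of $E_\cS$ in the complex.

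Assembling the pieces: $c$ is a jumping number $\iff H^0(Y,\cO_Y(a_0)\otimes\cG(cD))\ne 0$ (Corollary~\ref{cor. reduction to global invariants}) $\iff \chi(\cO_{E^{\cS_c}}(\sum_V a_V'(c)E_V))\ne 0$ (Lemma~\ref{lemma reduction to euler char}, noting the left side is a dimension hence equals its own Euler characteristic) $\iff$ the alternating sum over nonempty nested $\cS\subset\cS_c$ of $(-1)^{|\cS|+1}\chi(\cO_{E_\cS}(\sum_V a_V'(c)E_V))$ is nonzero. The main obstacle I anticipate is making the Mayer–Vietoris complex genuinely rigorous: one must justify that the \v{C}ech-type complex of structure sheaves of the strata $E_\cS$ is exact, which rests on the geometry of normal crossings of the $E_V$ in the wonderful model and on the combinatorial description of which intersections are empty. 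A clean way to see exactness is to check it locally: near a point of $\wti Y$ the divisors $E_V$ passing through it form (part of) a simple normal crossings divisor, so the sheaves involved are structure sheaves of coordinate subspaces, and the complex becomes the tensor product of two-term Koszul-type complexes $0\to\cO\to\cO/(f_i)\to\cdots$, whose exactness away from the empty intersections is standard. One should also take care that the indexing by nested subsets matches the local coordinate subspaces — but this is exactly the defining property of the building set $\cG$ and the wonderful compactification, so no new input is needed.
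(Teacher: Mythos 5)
Your proposal follows the same route as the paper: reduce via Corollary~\ref{cor. reduction to global invariants} and Lemma~\ref{lemma reduction to euler char} to an Euler characteristic on $E^{\cS_c}$, then apply the Mayer--Vietoris resolution $0\to\cO_{E^{\cS_c}}\to\bigoplus_{|\cS|=1}\cO_{E_\cS}\to\bigoplus_{|\cS|=2}\cO_{E_\cS}\to\cdots$ twisted by the line bundle, together with the fact (from \cite{DP}-4.2 and \cite{MS}-2.7) that $E_\cS\ne\emptyset$ iff $\cS$ is nested. This matches the paper's proof; you merely spell out the local SNC justification of exactness that the paper takes as standard.
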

\begin{proof} Follows from Lemma \ref{lemma reduction to euler
char} and Corollary \ref{cor. reduction to global invariants} via the Mayer-Vietoris exact sequence
$$0\ra\cO_{E^{\cS_c}}\ra\mathop{\bigoplus_{\cS\subset\cS_c}}_{
|\cS|=1}\cO_{E_\cS}\ra \mathop{\bigoplus_{\cS\subset\cS_c}}_{
|\cS|=2}\cO_{E_\cS}\ra\ldots\ra\cO_{E_{\cS_c}}\ra 0.$$ The
intersection $E_\cS$ is nonempty if and only if $\cS$ is nested
(\cite{MS}-2.7, \cite{DP}-4.2).
\end{proof}

Next goal is to compute $\chi\left (\cO_{E_{\cS}}\left
(\sum_{V\in\cG}a_V'(c)E_V\right )\right )$ via
Hirzebruch-Riemann-Roch.

\begin{rem}\label{rem. isom} Let $I\subset\bZ[c_V]_{V\in\cG}$ be the ideal of
\cite{DP}-5.2 described in the introduction. By loc. cit. there is
an isomorphism
\begin{align}\label{eq. isom cohomology}
\bZ[c_V]_{V\in\cG} / I\   & \mathop{\longrightarrow}^{\sim}\
H^*(\wti{Y},\bZ)\  \mathop{\longleftarrow}^{\sim}\
\bZ[[c_V]]_{V\in\cG} / I \\
\notag & 1\mapsto [\wti{Y}],\\
\notag & c_V  \mapsto [E_V]\ \ \ \ \text{ if }V\ne 0,\\
\notag & c_0 \mapsto -[E_0].
\end{align}
Indeed, this follows from \cite{DP}-5.2 Theorem, \cite{DP}-4.1
Theorem, part (2), and \cite{DP}-4.2 Theorem, part (4). The only
case left out by \cite{DP}-4.2 Theorem, part (4) is the one
corresponding with $E_0$ in our notation. But this follows from
the fact that, in their notation, the linear equivalence class of
$D_{V^*}$ restricted to $D_{V^*}$  is the negative of the class of
the proper transform in $D_{V^*}$ of a general hyperplane in the
exceptional divisor of the blowup of the origin of $V$. The
objects $V$ and $D_{V^*}$ of \cite{DP} correspond to $\bC^n$ and,
respectively, $\wti{Y}$, in our notation. The exceptional divisor
of the blowup of the origin of $V$ is, in our notation,
$\bP^{n-1}$, the ambient space of our projective arrangement of
hyperplanes.
\end{rem}

\begin{lem}\label{lem Computation of chi E_S} With the notation as in Theorem \ref{thm. jumping numbers},
let $\emptyset\ne\cS$ be a nested subset of $\cG-\{0\}$, and $c$ a
rational number. Then

\begin{align*}
\chi\left (\cO_{E_{\cS}} \left
(\sum_{V\in\cG}a_V'(c)E_V\right )\right ) &=\\
=  \sum_{0\le j}^{n-1-|\cS|} \frac{1}{(n-1-|\cS|-j)!} & \left (
\sum_{V\in\cG} a_V(c)c_V \right )^{n-1-|\cS|-j} T_j^\cS
\prod_{V\in\cS} c_V,
\end{align*}
where the right-hand side is viewed as an intersection number via
the isomorphism (\ref{eq. isom cohomology}).

\end{lem}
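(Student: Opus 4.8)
The plan is to apply Hirzebruch–Riemann–Roch (Theorem \ref{thm HRR}) to the vector bundle $\cO_{E_\cS}\left(\sum_{V\in\cG}a_V'(c)E_V\right)$ on the smooth projective variety $E_\cS$, which is nonempty since $\cS$ is nested. Thus $\chi(\cO_{E_\cS}(\sum a_V'(c)E_V))=\int_{E_\cS}\mathrm{ch}\left(\cO_{E_\cS}(\sum a_V'(c)E_V)\right)\cdot Td(E_\cS)$. The Chern character of the line bundle is simply $\exp\left(\sum_{V\in\cG}a_V'(c)[E_V]|_{E_\cS}\right)$; upon expanding and recalling $a_0'(c)=a_0$ while $a_0$ is the coefficient already folded into the notation, and that under the isomorphism (\ref{eq. isom cohomology}) $[E_V]\mapsto c_V$ for $V\ne 0$ and $[E_0]\mapsto -c_0$, this Chern character becomes $\exp\left(\sum_{V\in\cG}a_V(c)c_V\right)$ after one is careful about the sign convention on $c_0$ versus $a_0'(0)=a_0=-a_0(0)$ built into the definition of $a_V(c)$ for $V=0$. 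The degree-$k$ part of $\mathrm{ch}$ is then $\frac{1}{k!}\left(\sum_{V\in\cG}a_V(c)c_V\right)^k$, and integrating over $E_\cS$ means taking the coefficient against $[E_\cS]=\prod_{V\in\cS}c_V$ inside $\bZ[c_V]_{V\in\cG}/I$ (which by Remark \ref{rem. isom} is $H^*(\wti Y,\bZ)$, with the fundamental class of $E_\cS$ represented by $\prod_{V\in\cS}c_V$). The dimension of $E_\cS$ is $n-1-|\cS|$, so the sum over degrees $j$ of the Todd part against degree $n-1-|\cS|-j$ of the Chern character produces exactly the claimed sum, provided $Td(E_\cS)$ is identified with $\sum_j T_j^\cS$ modulo $I$.

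The heart of the argument is therefore the identification $Td(E_\cS)=\sum_{j\ge0}T_j^\cS$ in $H^*(E_\cS,\bQ)$, pushed into $\bZ[c_V]_{V\in\cG}/I$ via multiplication by $\prod_{V\in\cS}c_V$. First I would recall the structure of $E_\cS$: it is a (fibered) product of blown-up projective spaces, and its normal bundle sequence in $\wti Y$ decomposes according to the nested chain. Following \cite{DP}-section 4 and the iterated-blowup construction of section 3, $E_\cS$ admits a description allowing its tangent bundle to be built from the tangent bundles of the ``pieces'' indexed by $V\in\cS\cup\{\bC^n\}$ (each piece being a resolution of $\bP(V/V_\cS)$ or similar), together with normal bundle contributions $\cO(E_W)$ for the $W$ in the building set interpolating between consecutive elements $V_\cS\subset W\varsubsetneq V$ of $\cS$. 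The Todd class being multiplicative in exact sequences, $Td(E_\cS)=\prod_{V\in\cS\cup\{\bC^n\}}\prod_{V_\cS\subset W\varsubsetneq V, W\in\cG}(\text{local Todd factor})$, and the point is that each local factor is precisely $P_W^{\cS,V}$ as given in Definition \ref{def. poly P}: the terms $Q(c_W)$ come from the exceptional divisor $E_W$ at the $W$-th blowup, while the two powers of $Q$ of the form $Q(-\sum_{W'\subset W}c_{W'})^{\pm(\delta(V)-\delta(W))}$ encode the change in the relevant projective-space Todd class of the form $Q(h)^{\dim+1}$ (with $h$ the hyperplane class, which on the strict transform is $-\sum_{W'\subset W}c_{W'}$) as one passes through the blowup — the telescoping of these powers across $W$ producing the net tangent bundle contribution. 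Concretely, this is where Proposition \ref{prop. chern classes blow up} enters: applying it iteratively over the $\rho_i$ and restricting to $E_\cS$ yields the product formula, and the bookkeeping of which $c_{W'}$ survive (those with $\{W'\}\cup\cS$ nested, the rest being killed in $I$) matches the summation constraints in Definition \ref{def. poly P}.

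The main obstacle I anticipate is precisely this last identification of $Td(E_\cS)$ with $\prod P_W^{\cS,V}$: it requires a careful analysis of the geometry of the wonderful model restricted to a stratum $E_\cS$, in particular keeping track of (i) the hyperplane classes on the various $\bP(\cdot)$ factors and how they restrict along the chain $V_\cS\subset\cdots\subset V$, (ii) the exceptional classes $[E_W]$ and their self-intersections, and (iii) the fact that many monomials in the $c_{W'}$ vanish in $\bZ[c_V]_{V\in\cG}/I$ (relations of type (\ref{eq, type 1}) and (\ref{eq, type 2})), which is what allows the unconstrained formal power series $Q(\cdots)$ to be rewritten with the nestedness constraints. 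I would handle this by induction on $|\cG|$ or on the number of blowups, using Proposition \ref{prop. chern classes blow up} at each step to track how $Td$ and the classes transform, and checking the base case $\cS=\{V\}$ with $V$ a point or a hyperplane directly. A secondary, purely bookkeeping obstacle is the sign/shift convention relating $a_0'(c)$, $a_V(c)$ at $V=0$, and the class $c_0\mapsto-[E_0]$; I would resolve this once and for all at the start by verifying that $\sum_{V\in\cG}a_V'(c)[E_V]=\sum_{V\in\cG-\{0\}}a_V(c)[E_V]+a_0[E_0]$ maps to $\sum_{V\in\cG}a_V(c)c_V$ under (\ref{eq. isom cohomology}), using $a_0(c)=-a_0$ in the definition of $a_V(c)$ for $V=0$ and $[E_0]\mapsto-c_0$.
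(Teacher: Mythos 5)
Your proposal is correct and follows essentially the same route as the paper: apply Hirzebruch--Riemann--Roch to the line bundle on $E_\cS$, convert $\sum a_V'(c)[E_V]|_{E_\cS}$ to $\sum a_V(c)c_V$ via the isomorphism (\ref{eq. isom cohomology}) and the sign convention $c_0\mapsto -[E_0]$, and reduce to identifying $Td(E_\cS)$ with $\sum_j T_j^\cS$, pushed forward to $\wti Y$ by multiplication with $\prod_{V\in\cS}c_V$. Your sketch of the remaining Todd-class identification -- product decomposition of $E_\cS$ from Proposition \ref{prop. decomposition of canonical resolution}, multiplicativity of Todd classes, the blow-up Chern class formula of Proposition \ref{prop. chern classes blow up}, and tracking how divisor classes pull back along the projections -- is exactly what the paper packages into Lemmas \ref{lem. todd class of E_S} and \ref{lem. pullback of divisors}, Corollaries \ref{cor. todd class canonical resolutions} and \ref{cor. todd class special can res}, and Proposition \ref{prop. todd class of E_S}; the paper's organization is cleaner in that it first computes $Td(\wti Y)=\prod_V G_V^\cG$ once for a general wonderful model and then reuses that formula for each factor $\bP(\bC_V^\cS)^{\cG_V^\cS}$, rather than the fresh induction on blow-ups you suggest, but this is a difference of exposition, not of substance.
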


\medskip
\noindent {\bf Proof of Theorem \ref{thm. jumping numbers}.} It
follows from Lemma \ref{lema after mayer-vietoris} and
Lemma \ref{lem Computation of chi E_S}.\ \ $\Box$

\medskip

Before we prove Lemma \ref{lem Computation of chi E_S}, we need some preliminary results.

Write $Y=\bP(\bC^n)$ and $\wti{Y}=\bP(\bC^n)^\cG$. This notation
makes sense if one replaces $\bC^n$ and $\cG$ by any vector space
with a finite set of proper vector subspaces which is closed under
intersections and contains $\{0\}$. For a nested subset $\cS\subset
\cG-\{0\}$ and $V\in\cS\cup \{\bC ^n\}$, let $V_\cS$ be as in introduction. Define
$\bC^\cS _V=V/V_\cS$
and set
$$\cG^\cS _V=\{\ W'\subset\bC^\cS _V\ |\ W' \text{ is the image of }W\text{ in }\bC^\cS
_V\text{ for some }W\in\cG, W\subsetneq V \}.$$ We have the following
description of $E_\cS$ (\cite{MS}-2.7, \cite{DP}-4.3):

\begin{prop}\label{prop. decomposition of canonical resolution}
With the notation as above, let $\cS\subset \cG-\{0\}$ be a nested
subset. Then
$$E_\cS=\prod_{V\in\cS\cup\{\bC^n\}}\bP(\bC^\cS _V)^{\cG^\cS
_V}.$$
\end{prop}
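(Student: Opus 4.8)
The plan is to derive the proposition from the known description of the boundary strata of a wonderful model, namely \cite{MS}-2.7 (which is the projective-arrangement form of \cite{DP}-4.3), after making the dictionary between the two setups explicit. Recall that $\wti Y=\bP(\bC^n)^\cG$ is precisely the projective De Concini--Procesi wonderful model of $\cG$, built by the blow-up tower described above, and that the $E_V$ for $V\in\cG-\{0\}$ are exactly its boundary divisors (those with $\delta(V)<n-1$ are exceptional divisors of the blow-ups, those with $\delta(V)=n-1$ are the proper transforms of the hyperplanes $\bP(V)$, which are never blown up). For a nested $\cS\subset\cG-\{0\}$ the intersection $E_\cS=\bigcap_{V\in\cS}E_V$ is the closed boundary stratum indexed by $\cS$, and De Concini and Procesi prove that any such stratum is nonempty, smooth, and isomorphic to a product of smaller wonderful models. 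Translating their formula into the notation fixed above --- the ``top'' factor being indexed by $\bC^n$, the local vector spaces being the $\bC^\cS_V=V/V_\cS$ and the local building sets being the $\cG^\cS_V$ --- yields exactly $E_\cS=\prod_{V\in\cS\cup\{\bC^n\}}\bP(\bC^\cS_V)^{\cG^\cS_V}$. So in the write-up I would verify this dictionary and cite \cite{MS}-2.7 and \cite{DP}-4.3.

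If one wants a self-contained argument, the natural route is induction on $|\cG|$, undoing the blow-up tower one step at a time. Choose a maximal element $Z\in\cG-\{0\}$ for which $\cG':=\cG-\{Z\}$ is again a building set. If $\delta(Z)<n-1$ then $\wti Y$ is the blow-up of $\wti Y':=\bP(\bC^n)^{\cG'}$ along the proper transform $S$ of $\bP(Z)$ --- and $S$ is itself a wonderful model, namely $\bP(Z)^{\cG|_Z}$ --- with exceptional divisor $E_Z$; if $\delta(Z)=n-1$ there is no blow-up, $\wti Y=\wti Y'$, and $E_Z$ is just the proper transform of the hyperplane $\bP(Z)$. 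Given a nested $\cS$ one splits into cases. If $Z\notin\cS$, the decomposition of $E_\cS$ is inherited from that of $E'_\cS$ (supplied by the induction on $\cG'$): when $\cS\cup\{Z\}$ is not nested the center does not affect $E'_\cS$ and $E_\cS\cong E'_\cS$; when $\cS\cup\{Z\}$ is nested the center meets $E'_\cS$ in one factor of its product decomposition, and blowing up that factor matches the change from $\cG'$ to $\cG$ in the corresponding local building set $\cG^\cS_{V}$. If $Z\in\cS$, then $E_Z$ is the projectivized normal bundle of $S$ in $\wti Y'$, and intersecting it with the pullbacks of the remaining $E'_V$ ($V\in\cS-\{Z\}$) splits off exactly the new factor $\bP(\bC^\cS_Z)^{\cG^\cS_Z}$ while re-indexing the others. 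Throughout one uses that blow-ups and projectivized normal bundles commute with products in the evident way.

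The real content, and the main obstacle, is purely combinatorial: one needs that $\cG^\cS_V$ is again a building set in $\bC^\cS_V$, that $V_\cS$ is well-defined for nested $\cS$ and behaves correctly under passage to the quotients $V/V_\cS$ (so that the local data produced by the induction genuinely agree with the $\bC^\cS_V,\cG^\cS_V$ defined above), and that the geometric cases of the previous paragraph are governed precisely by whether $\cS\cup\{Z\}$ is nested. These are the standard structural properties of building and nested sets from \cite{DP}-section 2, used again in \cite{DP}-section 4; since the geometric induction is otherwise routine once they are in hand, I would not reproduce it and would cite \cite{MS}-2.7 and \cite{DP}-4.3 as the statement already does.
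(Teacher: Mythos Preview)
Your proposal is correct and matches the paper's approach exactly: the paper does not prove this proposition at all but simply states it with the citation ``(\cite{MS}-2.7, \cite{DP}-4.3)'' in the sentence introducing it. Your first paragraph --- cite \cite{MS}-2.7 and \cite{DP}-4.3 after making the dictionary explicit --- is precisely what the paper does (minus the explicit dictionary, which you helpfully spell out); the optional self-contained induction you sketch in the remaining paragraphs goes beyond what the paper supplies.
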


By \cite{Fu}- Example
15.2.12, the Todd class of $E_\cS$ is also a product:

\begin{lem}\label{lem. todd class of E_S} With the notation as in Proposition \ref{prop. decomposition of canonical resolution},
\begin{equation}\label{eq. todd class of E_S}
Td(E_\cS)=\prod_{V\in\cS\cup\{\bC^n\}}Td(\bP(\bC^\cS _V)^{\cG^\cS
_V}).
\end{equation}
More precisely, $Td(E_\cS)$ is the product of the pullbacks of
$Td(\bP(\bC^\cS _V)^{\cG^\cS _V})$ under the projections
associated to the decomposition in Proposition \ref{prop.
decomposition of canonical resolution}.
\end{lem}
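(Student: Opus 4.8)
The plan is to deduce the statement purely formally from the multiplicativity of the Todd class together with the product structure on $E_\cS$ furnished by Proposition \ref{prop. decomposition of canonical resolution}. First I would write $E_\cS=\prod_{V\in\cS\cup\{\bC^n\}}Z_V$, with $Z_V:=\bP(\bC^\cS_V)^{\cG^\cS_V}$, and let $\pi_V\colon E_\cS\ra Z_V$ denote the associated projections. Each $Z_V$ is a wonderful model of a projective space with respect to a subspace arrangement, hence smooth and projective; therefore $E_\cS$ is smooth and projective, and its tangent bundle decomposes as $T_{E_\cS}\cong\bigoplus_{V}\pi_V^*T_{Z_V}$.

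Next I would invoke two elementary facts. The first is that the Todd class is multiplicative on direct sums of vector bundles, $td(\cE\oplus\cF)=td(\cE)\,td(\cF)$, which is immediate from the definition $td(\cE)=\prod_i Q(x_i)$ in terms of Chern roots together with the Whitney product formula. The second is the compatibility of Chern classes — hence of Todd classes — with pullback, $td(\pi_V^*\cE)=\pi_V^*td(\cE)$. Combining these gives
\[
Td(E_\cS)=td(T_{E_\cS})=td\Bigl(\bigoplus_{V}\pi_V^*T_{Z_V}\Bigr)=\prod_{V}td(\pi_V^*T_{Z_V})=\prod_{V}\pi_V^*Td(Z_V),
\]
and rewriting $\prod_{V}\pi_V^*Td(Z_V)$ as the external product of the classes $Td(Z_V)$ under the K\"unneth isomorphism $H^*(E_\cS)\cong\bigotimes_V H^*(Z_V)$ yields exactly (\ref{eq. todd class of E_S}), in the refined form asserting that $Td(E_\cS)$ is the product of the pullbacks of the factors. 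This is precisely the content of \cite{Fu}-Example 15.2.12 specialized to our situation.

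I do not expect a genuine obstacle here; the one point that should be checked rather than waved through is that the decomposition of Proposition \ref{prop. decomposition of canonical resolution} is an isomorphism of smooth projective varieties, and not merely a bijection of point sets, so that both the splitting of the tangent bundle and the K\"unneth formula legitimately apply. This is guaranteed by the references \cite{MS}-2.7 and \cite{DP}-4.3 cited there, and once it is in hand the remainder of the argument is formal.
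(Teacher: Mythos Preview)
Your proposal is correct and follows the same approach as the paper: the paper simply cites \cite{Fu}-Example 15.2.12 for this lemma, and your argument is precisely an unpacking of that reference (tangent bundle of a product splits as a direct sum of pulled-back tangent bundles, Todd class is multiplicative on direct sums and functorial under pullback). Your additional remark that the product decomposition of Proposition \ref{prop. decomposition of canonical resolution} must be an isomorphism of smooth projective varieties, secured by \cite{DP}-4.3 and \cite{MS}-2.7, is exactly the check implicit in the paper's citation.
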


For every $V\in\cG-\{0\}$ define a formal power series $F_V\in\bZ
[[c_V]]_{V\in\cG}$ by
$$
F_V:=(1-\mathop{\sum_{ W\varsubsetneq
V}}_{W\in\cG}c_W)^{-(n-\delta(V))}(1+c_V)(1-\mathop{\sum_{
W\subset V }}_{W\in\cG} c_W)^{n-\delta (V)}.$$  Also, set
$F_0=(1-c_0)^n$.

\begin{prop}\label{prop. chern class canonical resolution} With the
notation as above, the total Chern class $c(\wti{Y})$ is the image
in $H^*(\wti{Y},\bZ)$ of $\prod_{V\in\cG}F_V$ under the map
(\ref{eq. isom cohomology}).
\end{prop}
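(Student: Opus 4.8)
The plan is to compute $c(\wti Y)$ by tracking the total Chern class through the sequence of blow-ups $\rho_i : Y_{i+1}\ra Y_i$ that builds the wonderful model, using Proposition \ref{prop. chern classes blow up} at each stage. First I would record the base case: $Y_0 = Y = \bP^{n-1}$ has total Chern class $c(Y)=(1+h)^n$ where $h$ is the hyperplane class, and under the isomorphism (\ref{eq. isom cohomology}) one has $h = [E_0] = -c_0$ (this is precisely the sign convention singled out in Remark \ref{rem. isom}), so $c(Y_0)=(1-c_0)^n = F_0$, matching the claimed factor for $V=0$. Then, inductively, at the blow-up $\rho_i$ along $C_i$ — the disjoint union of proper transforms $C_{V,i}$ of $\bP(V)$ for those $V\in\cG-\{0\}$ with $\delta(V)=i+1$ — I apply Proposition \ref{prop. chern classes blow up} with $d = r(V) = n-\delta(V) = \codim_{Y}\bP(V)$ (the centers are disjoint, of the same codimension, and one checks this codimension is preserved under the earlier blow-ups since no center is contained in another by nestedness failure being excluded only in pairs, i.e. here the $C_{V,i}$ for fixed $\delta(V)$ really are disjoint). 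This introduces, for each such $V$, a factor
$$
\left(\sum_{0\le k\le r(V)}\rho^* c_{k,V}\right)^{-1}(1+[E_V])\left(\sum_{0\le i\le r(V)}(1-[E_V])^i\,\rho^* c_{r(V)-i,V}\right),
$$
where $c_{k,V}\in H^{2k}(Y,\bZ)$ is a global class restricting to $c_k(N_{C_{V,i}})$ on the center.

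The heart of the argument — and the step I expect to be the main obstacle — is identifying these ambient Chern-class representatives $c_{k,V}$ of the normal bundle and massaging the resulting factor into exactly the stated $F_V$. The center $C_{V,i}$ is (the proper transform of) $\bP(V)$, and inside the partially-resolved space its normal bundle is built from the normal bundle of $\bP(V)$ in $Y$, twisted by the exceptional divisors $E_W$ for those $W\in\cG$ already blown up with $W\subsetneq V$ (these are the strata meeting $\bP(V)$ that have been modified). Concretely the normal bundle of $C_{V,i}$ in $Y_i$ fits: as a bundle on $\bP(V)\cong\bP^{\delta(V)}$ pulled back suitably, $N$ has total Chern class obtained from $(1+h_V)^{n-\delta(V)}$ — the normal bundle of a linear $\bP^{\delta(V)}$ in $\bP^{n-1}$ is $\cO(1)^{\oplus(n-\delta(V))}$ — corrected by the classes of the previously-created exceptionals. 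Under (\ref{eq. isom cohomology}), $h_V$ (the hyperplane class on the relevant $\bP(V)$-factor of the center, equivalently the restriction of the class $E_0$ or of $-\sum_{W\subsetneq V} c_W$) must be shown to correspond to $-\sum_{W\subsetneq V,\ W\in\cG} c_W$; and the full ambient normal-bundle class becomes $\sum_k\rho^* c_{k,V} = (1-\sum_{W\subsetneq V}c_W)^{n-\delta(V)}$. Plugging this in, the big factor telescopes: $\left(\sum_k\rho^*c_{k,V}\right)^{-1}$ contributes $(1-\sum_{W\subsetneq V}c_W)^{-(n-\delta(V))}$, the term $(1+[E_V])$ contributes $(1+c_V)$, and the sum $\sum_i(1-[E_V])^i\rho^*c_{r(V)-i,V}$ — which is the Chern class of the normal bundle of $E_V$ inside $\wti Y$ expressed via the exceptional and the pulled-back data — collapses to $(1-\sum_{W\subset V}c_W)^{n-\delta(V)}$, using that $[E_V]=c_V$ and $\sum_{W\subsetneq V}c_W + c_V = \sum_{W\subset V}c_W$.

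Assembling: starting from $F_0=(1-c_0)^n$ and multiplying in one factor $F_V$ for each $V\in\cG-\{0\}$ as it gets blown up (the order within a fixed dimension is immaterial since the centers are disjoint, and across dimensions the recursive application of Proposition \ref{prop. chern classes blow up} is what produces the product structure — proper transforms of earlier exceptionals have their classes $\rho^*$-pulled back, which is absorbed into the notation $c_W$), one obtains
$$
c(\wti Y) = \prod_{V\in\cG} F_V
$$
in $H^*(\wti Y,\bZ)$ via (\ref{eq. isom cohomology}). I would present the computation as an induction on $i$ (the dimension of the centers being blown up at stage $i$), with the inductive hypothesis that $c(Y_{i})$ equals the image of $\prod_{\delta(V)\le i}F_V$, where the product ranges over $V\in\cG$ with $\delta(V)\le i$ (including $V=0$), and the inductive step being exactly the factor-by-factor verification above. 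The only genuinely delicate point throughout is the bookkeeping of which exceptional divisors enter the normal bundle of each center and with what multiplicity — i.e. getting the exponents $\pm(n-\delta(V))$ and the index sets $W\subsetneq V$ versus $W\subset V$ under the sums correct — which is precisely the content of the defining formula for $F_V$.
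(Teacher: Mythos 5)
Your plan matches the paper's proof: apply Proposition \ref{prop. chern classes blow up} inductively through the sequence of blow-ups $\rho_i\colon Y_{i+1}\to Y_i$, with base case $c(Y_0)=(1-c_0)^n=F_0$ and an inductive step converting each blow-up factor into $F_V$ using the identification of the first Chern class of the normal bundle of the center with $-\sum_{W\subsetneq V,\,W\in\cG}c_W$. The one place you gesture at rather than pin down --- that the normal bundle $N_{V,i}$ of $C_{V,i}$ in $Y_i$ is a direct sum of $n-1-i$ copies of the \emph{single} line bundle $L_{V,i}=\cO_{Y_i}\bigl(E_{0,i}-\sum_{0\ne W\subsetneq V,\,W\in\cG}E_{W,i}\bigr)$ restricted to $C_{V,i}$ (not merely some rank-$(n-1-i)$ bundle ``twisted by exceptionals'') --- is exactly where the paper invokes \cite{DP}-5.1 adjusted for the projective case as in Remark \ref{rem. isom}, and it is the one nontrivial input that makes the telescoping of the blow-up factor into $F_V$ go through.
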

\begin{proof} For $V\in \cG-\{0\}$ with $\delta(V)=i+1$, let
$N_{V,i}$ be the normal bundle of $C_{V,i}$ in $Y_i$. Let
$$L_{V,i}=\cO_{Y_i} (E_{0,i}-\mathop{\sum _{0\ne W\varsubsetneq
V}}_{W\in\cG} E_{W,i})).$$ By \cite{DP}-5.1 (the statement in {\it loc. cit.}
needs to be adjusted for the projective case as in Remark \ref{rem. isom}),
$$N_{V,i}\cong {{L_{V,i}}^{\oplus n-1-i}}_{| C_{V,i}}.$$
We want to apply Proposition \ref{prop. chern classes blow up}.
One of the quantities we need is
\begin{align*}
\left [\sum_{0\le k\le n-1-i}\rho_i ^* c_k({L_{V,i}}^{\oplus (
n-1-i)} )\right ]^{-1} &=\rho_i^*c(L_{V,i})^{-(n-1-i)} \\
&=( 1+ [E_{0,i+1}]- \mathop{\sum _{0\ne W\varsubsetneq
V}}_{W\in\cG} [E_{W,i+1}])^{-(n-1-i)}.
\end{align*}
Also, we have
\begin{align*}
&\sum_{0\le j\le n-1-i} (1-[E_{V,i+1}])^j \rho_i ^*
c_{n-1-i-j}(L_{V,i}^{\oplus\ n-1-i})=&\\
&=\sum_{0\le j\le n-1-i} (1-[E_{V,i+1}])^j {n-1-i \choose n-1-i-j}
\rho_i^* c_1(L_{V,i})^{n-1-i-j} \\
&= (1-[E_{V,i+1}] + [E_{0,i+1}] - \mathop{\sum _{0\ne
W\varsubsetneq V}}_{W\in\cG} [E_{W,i+1}])^{n-1-i}
\end{align*}
By Proposition \ref{prop. chern classes blow up},
\begin{align*}
c(Y_{i+1}) = \rho _i ^* c(Y_i)\mathop{\prod _{V\in\cG}}_{\delta
(V)=i+1}\left \{ ( 1+ [E_{0,i+1}]- \mathop{\sum _{0\ne
W\varsubsetneq V}}_{W\in\cG}
[E_{W,i+1}])^{-(n-1-i)} (1+[E_{V,i+1}]) \cdot \right. \\
\left. (1-[E_{V,i+1}] + [E_{0,i+1}] - \mathop{\sum _{0\ne
W\varsubsetneq V}}_{W\in\cG} [E_{W,i+1}])^{n-1-i} \right \}
\end{align*}
Since $\wti{Y}=Y_{n-2}$, the Proposition follows from the last
formula.
\end{proof}

Let $Q(x)=x/(1-\exp (-x))$. For every $V\in\cG-\{0\}$ define a
formal power series $G_V^\cG\in\bQ [[c_V]]_{V\in\cG}$ by
$$G_V^\cG:=Q(-\mathop{\sum_{ W\varsubsetneq
V}}_{W\in\cG}c_W)^{-r(V)}Q(c_V)Q(-\mathop{\sum_{ W\subset
V}}_{W\in\cG} c_W)^{r(V)}.$$ Also, set
$G_0^\cG=Q(-c_0)^n=Q(-c_0)^{r(0)}$. Recall from introduction that
the codimension function $r$ depends only $\cG$, a fact which is
suppressed from the notation. Since the Todd class, as the total
Chern class, is multiplicative on exact sequences of vector
bundles, by Proposition \ref{prop. chern class canonical
resolution} we have:

\begin{cor}\label{cor. todd class canonical resolutions} With the
notation as in Proposition \ref{prop. chern class canonical
resolution}, the Todd class $Td(\wti{Y})$ is the image in
$H^*(\wti{Y},\bQ)$ of $\prod_{V\in\cG}G_V^\cG$ under the map
induced by (\ref{eq. isom cohomology}) after $\otimes_\bZ\bQ$.
\end{cor}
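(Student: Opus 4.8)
The plan is to promote the total Chern class identity of Proposition \ref{prop. chern class canonical resolution} to an identity in rational $K$-theory, and then apply the multiplicative class $td$. First I would revisit that proof: it is an iterated application of Proposition \ref{prop. chern classes blow up}, which rests on the exact sequences of vector bundles on a blow-up (\cite{Fu}-Example 15.4.2), and at every stage $\rho_i\colon Y_{i+1}\to Y_i$ the normal bundle of $C_{V,i}$ splits as $L_{V,i}^{\oplus r(V)}$ by \cite{DP}-5.1. Set
$$\alpha_V:=[E_0]-\mathop{\sum_{0\ne W\varsubsetneq V}}_{W\in\cG}[E_W]\in H^2(\wti Y,\bZ),\qquad \beta_V:=\alpha_V-[E_V].$$
The computations in the proof of Proposition \ref{prop. chern class canonical resolution} exhibit the factor $F_V$ attached to $V\in\cG-\{0\}$ as the total Chern class of the virtual bundle $\xi_V\in K^0(\wti Y)_\bQ$ whose Chern roots are $[E_V]$ together with $r(V)$ copies of $\beta_V$ and $-r(V)$ copies of $\alpha_V$; likewise the image of $F_0$ under (\ref{eq. isom cohomology}) is $(1+[E_0])^n=c(\rho^*T_{\bP^{n-1}})$, by the Euler sequence and $[E_0]=\rho^*c_1(\cO(1))$ (a general hyperplane containing no blow-up center), so $F_0=c(\xi_0)$ with $\xi_0:=\rho^*T_{\bP^{n-1}}$. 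Each $\xi_V$ with $V\ne 0$ has rank $1$, so after subtracting $|\cG|-1$ copies of $\cO_{\wti Y}$ the virtual bundle $\xi:=\xi_0\oplus\bigoplus_{V\ne 0}\xi_V\ominus(|\cG|-1)\cO_{\wti Y}$ has rank $n-1=\dim\wti Y$, while its total Chern class and its Todd class are unchanged.

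The next step is to upgrade $c(T_{\wti Y})=c(\xi)$, which is the content of Proposition \ref{prop. chern class canonical resolution}, to the equality $[T_{\wti Y}]=\xi$ in $K^0(\wti Y)_\bQ$. Since $\wti Y$ is an iterated blow-up of $\bP^{n-1}$ along smooth linear centers, it has a cell decomposition by affine spaces, so the Chern character is a ring isomorphism $ch\colon K^0(\wti Y)_\bQ\xrightarrow{\ \sim\ }H^{\mathrm{even}}(\wti Y,\bQ)$; as $ch$ is a universal polynomial in the rank and the Chern classes, the equalities $\rank\xi=n-1=\rank T_{\wti Y}$ and $c(\xi)=c(T_{\wti Y})$ force $ch(\xi)=ch(T_{\wti Y})$, hence $[T_{\wti Y}]=\xi$. (Alternatively, and closer to the paper's phrasing: because the normal bundles split, the exact sequences of vector bundles underlying \cite{Fu}-Example 15.4.2 already give this $K^0$-identity directly, without invoking $ch$.)

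It then remains to apply $td$. Being multiplicative on short exact sequences, $td$ factors through $K^0(\wti Y)_\bQ$, with $td(\cL)=Q(c_1(\cL))$ on a line bundle $\cL$ and $td(\cO_{\wti Y})=Q(0)=1$; hence
$$Td(\wti Y)=td(\xi)=td(\rho^*T_{\bP^{n-1}})\cdot\prod_{V\in\cG-\{0\}}Q(\alpha_V)^{-r(V)}\,Q([E_V])\,Q(\beta_V)^{r(V)}.$$
Translating under (\ref{eq. isom cohomology}) --- so that $\alpha_V\mapsto-\sum_{W\varsubsetneq V}c_W$, $[E_V]\mapsto c_V$, $\beta_V\mapsto-\sum_{W\subset V}c_W$, and $td(\rho^*T_{\bP^{n-1}})=Q([E_0])^n\mapsto Q(-c_0)^n$ --- identifies the right-hand side term by term with $\prod_{V\in\cG}G_V^\cG$, which is exactly the assertion. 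The one genuinely delicate step is the passage from Chern classes to $K$-theory classes in the second paragraph; everything else is bookkeeping already essentially carried out in the proof of Proposition \ref{prop. chern class canonical resolution}.
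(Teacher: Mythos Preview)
Your proof is correct and, at its core, is exactly the paper's one-line argument: the Todd class, like the total Chern class, is multiplicative on exact sequences, so the same exact sequences from \cite{Fu}-Example 15.4.2 (with split normal bundles by \cite{DP}-5.1) that produced Proposition \ref{prop. chern class canonical resolution} produce the corollary upon replacing $c(-)$ by $td(-)$ throughout --- your parenthetical ``alternatively'' is precisely this. The detour through the Chern character isomorphism and a cell decomposition of $\wti Y$ is unnecessary: once $c(T_{\wti Y})=c(\xi)$ is known, $td(T_{\wti Y})=td(\xi)$ follows immediately because the Todd class is a universal power series in the Chern classes, with no appeal to injectivity of $ch$.
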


Replacing, in Corollary \ref{cor. todd class canonical
resolutions}, $\wti{Y}=\bP (\bC^n)^\cG$ with $\bP
(\bC_V^\cS)^{\cG^\cS_V}$, we obtain:

\begin{cor}\label{cor. todd class special can res} With the
notation as in Proposition \ref{prop. decomposition of canonical
resolution} and Corollary \ref{cor. todd class canonical
resolutions}, let $\cS\subset\cG-\{0\}$ be a nested subset and let
$V\in\cS\cup\{\bC^n\}$. The Todd class $Td(\bP
(\bC_V^\cS)^{\cG^\cS_V})$ is the image in $H^*(\bP
(\bC_V^\cS)^{\cG^\cS_V},\bQ)$ of
$$\prod_{W'\in\;\cG^\cS
_V}G_{W'}^{\cG_V^\cS}\ \ \ \in\bQ[[c_{W''}]]_{W''\in \cG_V^\cS}$$
under the map $c_{W''}\mapsto [E_{W''}]$ ($W''\ne 0$) and
$c_0\mapsto -[E_0]$.
\end{cor}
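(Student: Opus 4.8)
\medskip
\noindent\textbf{Proof plan.}
The plan is to obtain this by specializing Corollary~\ref{cor. todd class canonical resolutions}. First I would note that nothing in the derivation of Proposition~\ref{prop. chern class canonical resolution} and Corollary~\ref{cor. todd class canonical resolutions} uses that the ambient space is $\bC^n$ or that $\cG$ comes from a hyperplane arrangement. As remarked just before Proposition~\ref{prop. decomposition of canonical resolution}, the construction of the wonderful model, the presentation (\ref{eq. isom cohomology}) of its cohomology ring, the normal-bundle formula \cite{DP}-5.1 (adjusted for the projective case as in Remark~\ref{rem. isom}), and hence the power series $F_{W'}$ and $G_{W'}^{\cH}$, all make sense and hold for any pair $(U,\cH)$ in which $U$ is a finite dimensional complex vector space and $\cH=\cH'\cup\{0\}$ for $\cH'$ a building set of proper subspaces of $U$; in $G_{W'}^{\cH}$ the codimension function $r$ is the one of $U$. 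For such a pair, Corollary~\ref{cor. todd class canonical resolutions} reads: $Td(\bP(U)^{\cH})$ is the image of $\prod_{W'\in\cH}G_{W'}^{\cH}$ under the map of (\ref{eq. isom cohomology}), that is, under $c_{W'}\mapsto[E_{W'}]$ for $W'\ne0$ and $c_0\mapsto-[E_0]$.

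I would then apply this with $(U,\cH)=(\bC^\cS_V,\cG^\cS_V)$. The point that needs care, and the only real obstacle, is to justify that this is again an admissible pair and that $\bP(\bC^\cS_V)^{\cG^\cS_V}$, defined through Proposition~\ref{prop. decomposition of canonical resolution}, is indeed its wonderful model. This is part of the De Concini--Procesi theory: $\cG^\cS_V$ is obtained from $\cG$ by restricting to $V$ and contracting by $V_\cS$, operations under which the building-set property is preserved; $0\in\cG^\cS_V$ because $0\in\cG$ and $0\subsetneq V$, so $0$ occurs among the images of the $W\in\cG$ with $W\subsetneq V$; and \cite{DP}-4.3 identifies the factor of $E_\cS$ indexed by $V$ in Proposition~\ref{prop. decomposition of canonical resolution} with the wonderful model of $(\bC^\cS_V,\cG^\cS_V)$, the projectivization being present precisely because $0\in\cG^\cS_V$. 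One should also note that the codimension of the image $W'$ of $W$ in $\bC^\cS_V$ is $\delta(V)-\delta(W)$, so that the intrinsic $G_{W'}^{\cG^\cS_V}$ agrees with the one written in the statement; this is immediate.

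Granting these structural facts from \cite{DP}, the corollary is exactly Corollary~\ref{cor. todd class canonical resolutions} applied to $(\bC^\cS_V,\cG^\cS_V)$, the map of (\ref{eq. isom cohomology}) for this pair being $c_{W''}\mapsto[E_{W''}]$ ($W''\ne0$) and $c_0\mapsto-[E_0]$, and no further computation is needed. $\Box$
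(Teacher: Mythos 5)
Your proof is correct and takes essentially the same route as the paper, which simply deduces the corollary by ``replacing, in Corollary~\ref{cor. todd class canonical resolutions}, $\wti{Y}=\bP(\bC^n)^\cG$ with $\bP(\bC_V^\cS)^{\cG^\cS_V}$''. You make explicit the points the paper leaves implicit (that the whole construction and the derivation of $G^{\cH}_{W'}$ are intrinsic to a pair $(U,\cH)$, that $(\bC^\cS_V,\cG^\cS_V)$ is such a pair with $0\in\cG^\cS_V$, and that \cite{DP}-4.3 identifies the factor of $E_\cS$ as the corresponding wonderful model), which is a harmless and reasonable elaboration, not a different argument.
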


Next lemma puts together some computations from \cite{DP}-4.3,
\cite{MS}-Propositions 2.8 and 2.9:

\begin{lem}\label{lem. pullback of divisors} With the notation as in Proposition
\ref{prop. decomposition of canonical resolution}, let
$\emptyset\ne\cS\subset\cG-\{0\}$ be a nested subset.  For
$V\in\cS\cup \{\bC^n\}$, let $p_V$ be the projection of $E_\cS$
onto the factor $\bP(\bC^\cS_V)^{\cG^\cS_V}$ associated to the
decomposition in Proposition \ref{prop. decomposition of canonical
resolution}. Let $W'\in\cG^\cS_V$ with corresponding divisor
$E_{W'}'$ in $\bP (\bC_V^\cS)^{\cG^\cS_V}$.

(a) If $W'\ne 0$, then $p_V^* E_{W'}' \sim E_W\,_{|E_\cS}$, where
$W$ is the unique element of $\cG$ nested between $V$ and $V_\cS$
whose image in $\bC^\cS_V=V/V_\cS$ is $W'$.

(b) If $W'=0$, then
$$p_V^* E_0' \sim \left (E_0 -  \mathop{\sum_{0\ne W\subset V_\cS, W\in\cG}}_{\{W\}\cup S\text{ nested}} E_W\right )_{|E_\cS}.$$
\end{lem}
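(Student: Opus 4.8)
The plan is to verify the two linear-equivalence statements directly from the inductive construction of the wonderful model $\wti Y = \bP(\bC^n)^\cG$, using the explicit recollection of \cite{DP}-4.3 and the factorization in Proposition \ref{prop. decomposition of canonical resolution}. First I would fix a nested subset $\emptyset\ne\cS\subset\cG-\{0\}$ and an element $V\in\cS\cup\{\bC^n\}$, and recall that the wonderful model associated to the pair $(\bC^\cS_V,\cG^\cS_V)$ arises as an iterated blow-up of $\bP(\bC^\cS_V)$ exactly as $\wti Y$ arises from $Y$, so that the divisor $E'_{W'}$ attached to $W'\in\cG^\cS_V$ is, by definition, the (proper transform of the) exceptional divisor over $\bP(W')\subset\bP(\bC^\cS_V)$, or $E'_0$ is the proper transform of a general hyperplane of $\bP(\bC^\cS_V)$. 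The heart of the matter is to identify, under the projection $p_V\colon E_\cS\to\bP(\bC^\cS_V)^{\cG^\cS_V}$, which divisor $E_W$ (or combination thereof) on $\wti Y$ restricts to the pullback of $E'_{W'}$ on $E_\cS$.

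For part (a), I would argue as follows. Let $W\in\cG$ be the unique element with $V_\cS\subset W\subsetneq V$ whose image in $\bC^\cS_V=V/V_\cS$ equals $W'$; such a $W$ exists and is unique because $\cG^\cS_V$ is obtained by taking images of those elements of $\cG$ strictly inside $V$, and because nestedness of $\{W\}\cup\cS$ together with the building-set axioms forces the correspondence $W\mapsto W'$ to be a bijection on the relevant subsets (this is the content of the set-theoretic parts of \cite{MS}-Propositions 2.8-2.9). The exceptional divisor $E_W$ on $\wti Y$ meets $E_\cS$ in the locus which, under the product decomposition of Proposition \ref{prop. decomposition of canonical resolution}, is precisely $p_V^{-1}$ of the exceptional divisor $E'_{W'}$ of the blow-up over $\bP(W')$ inside $\bP(\bC^\cS_V)^{\cG^\cS_V}$, with all other factors full. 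Hence as divisors on the smooth variety $E_\cS$ one has the scheme-theoretic equality $E_W\cap E_\cS = p_V^*E'_{W'}$, which gives the linear equivalence $E_W|_{E_\cS}\sim p_V^*E'_{W'}$. The only subtlety is checking transversality / reducedness of this intersection, which follows because $\wti Y$ is a wonderful model and all its boundary divisors $E_V$ cross normally.

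For part (b), the divisor $E'_0$ is the proper transform of a general hyperplane in $\bP(\bC^\cS_V)$, i.e.\ the pullback of $\cO_{\bP(\bC^\cS_V)}(1)$ minus the exceptional divisors introduced by blowing up the loci $\bP(W')$ for $0\ne W'\in\cG^\cS_V$ — which in the original model $\wti Y$ correspond to blowing up the $\bP(W)$ for $0\ne W\subset V_\cS$ with $\{W\}\cup\cS$ nested. So on $\wti Y$ the class $p_V^*E'_0$ is represented by $E_0$ (the proper transform of a general hyperplane of $Y$, which restricts correctly to the $\bP(\bC^\cS_V)$-factor because $V_\cS$ is the ``origin'' seen from $V$) corrected by subtracting exactly those $E_W|_{E_\cS}$ with $0\ne W\subset V_\cS$ and $\{W\}\cup\cS$ nested. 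I would make this precise by pulling back the tautological relation $E'_0\sim H' - \sum_{0\ne W'\in\cG^\cS_V}E'_{W'}$ on $\bP(\bC^\cS_V)^{\cG^\cS_V}$ (where $H'$ is the class of $\bP(\bC^\cS_V)$, i.e.\ $c_0$ pulled back), applying $p_V^*$, using part (a) for the terms with $W'\ne0$ whose preimages $W$ lie inside $V_\cS$, and identifying $p_V^*H'$ with $(E_0)|_{E_\cS}$ via Remark \ref{rem. isom} on the $\bP(\bC^\cS_V)$-factor.

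\textbf{Main obstacle.} The bookkeeping is routine once the correspondence $W\leftrightarrow W'$ and the compatibility of the blow-up orders are in hand; the genuinely delicate point is the combinatorial bijection in part (a) — that for each $W'\in\cG^\cS_V$ there is a \emph{unique} $W\in\cG$ nested with $\cS$, sitting strictly between $V_\cS$ and $V$, whose image is $W'$ — together with the transversality claim that $E_W\cap E_\cS$ is reduced and equals $p_V^*E'_{W'}$ on the nose rather than up to a multiple. Both of these are exactly the statements extracted from \cite{DP}-4.3 and \cite{MS}-Propositions 2.8-2.9, so the proof really amounts to assembling those references, checking that the base change from $(\bC^n,\cG)$ to $(\bC^\cS_V,\cG^\cS_V)$ is the correct one, and tracking signs in part (b).
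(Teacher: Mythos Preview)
Your treatment of part (a) matches the paper's: the paper simply cites \cite{MS}--Proposition 2.8 for $|\cS|=1$ and then iterates via \cite{MS}--Proposition 2.9 (equivalently the last paragraph of \cite{DP}--4.3), which amounts to exactly the bijection $W\leftrightarrow W'$ and the normal-crossing transversality you describe.

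Part (b), however, has a real gap. Two of your assertions are incorrect. First, the ``tautological relation'' $E'_0\sim H'-\sum_{0\ne W'\in\cG^\cS_V}E'_{W'}$ is false: $E'_0$ is by definition the proper transform of a \emph{general} hyperplane of $\bP(\bC^\cS_V)$, hence does not contain any centre being blown up, so simply $E'_0\sim H'$ with no exceptional correction. Second, you identify the nonzero $W'\in\cG^\cS_V$ with those $W\in\cG$ satisfying $0\ne W\subset V_\cS$; this is exactly backwards, since any $W\subset V_\cS$ has image \emph{zero} in $V/V_\cS$, while the nonzero $W'$ come from $W$ with $V_\cS\subsetneq W\subsetneq V$ --- i.e.\ the $W$'s of part (a), not those in the sum of (b). Your two errors happen to cancel and land on the right formula, but the argument as written is not a proof.

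The genuine source of the sum over $W\subset V_\cS$ in (b) is the normal bundle, not part (a). When $V_\cS\ne 0$, the factor $\bP(\bC^\cS_V)=\bP(V/V_\cS)$ arises inside $E_\cS$ as a fibre of the projectivized normal bundle of (the proper transform of) $\bP(V_\cS)$, and its hyperplane class $E'_0$ is the tautological $\cO(1)$ on that projective bundle. By the computation $N_{V_\cS,i}\cong L_{V_\cS,i}^{\oplus}$ with $L_{V_\cS,i}=\cO\bigl(E_{0,i}-\sum_{0\ne W\subsetneq V_\cS}E_{W,i}\bigr)$ recorded in the proof of Proposition \ref{prop. chern class canonical resolution}, this $\cO(1)$ restricts to $\bigl(E_0-\sum_{0\ne W\subset V_\cS}E_W\bigr)|_{E_\cS}$; this is precisely \cite{MS}--Proposition 2.8 when $|\cS|=1$, and the general case follows by iterating down the chain $\cS$. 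In particular your claim that $p_V^*H'\sim E_0|_{E_\cS}$ is correct only when $V_\cS=0$, i.e.\ when $V$ is the minimal element of $\cS$; for every other $V$ the class $E_0|_{E_\cS}$ lives on the bottom factor of the product decomposition, not on $\bP(\bC^\cS_V)$.
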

\begin{proof} For $\cS$ having only one element, this is
\cite{MS}- Proposition 2.8. For the rest, one iterates as in
\cite{MS}- Proposition 2.9 or, equivalently, as in the last
paragraph of the proof of the theorem of \cite{DP}-4.3.
\end{proof}

\begin{prop}\label{prop. todd class of E_S} With the notation as
in Proposition \ref{prop. decomposition of canonical resolution}
and Definition \ref{def. poly P}, $Td(E_\cS)$ is the image of the
formal power series
$$T^\cS:=\prod_{V\in\cS\cup\{\bC^n\}}\ \ \mathop{\prod_{V_\cS\subset
W\varsubsetneq V}}_{W\in\cG} P_W^{\cS , V}\ \ \ \ \ \in
\bQ[[c_W]]_{W\in\cG}
$$
in $H^*(E_\cS,\bZ)$ under the map $l_\cS: 1\mapsto
[\wti{Y}]_{|E_\cS}, c_{W}\mapsto [E_{W}]_{|\, E_\cS}$ ($W\ne 0$),
and $c_0\mapsto -[E_0]_{|\,E_\cS}$.
\end{prop}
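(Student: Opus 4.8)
The plan is to assemble the claim from the three ingredients already established: the product decomposition of $Td(E_\cS)$ (Lemma \ref{lem. todd class of E_S}), the computation of the Todd class of each factor $\bP(\bC^\cS_V)^{\cG^\cS_V}$ (Corollary \ref{cor. todd class special can res}), and the description of the pullbacks $p_V^*[E'_{W'}]$ to $E_\cS$ (Lemma \ref{lem. pullback of divisors}). By Lemma \ref{lem. todd class of E_S}, $Td(E_\cS)=\prod_{V\in\cS\cup\{\bC^n\}}p_V^*Td(\bP(\bC^\cS_V)^{\cG^\cS_V})$, and by Corollary \ref{cor. todd class special can res} each $Td(\bP(\bC^\cS_V)^{\cG^\cS_V})$ is the image of $\prod_{W'\in\cG^\cS_V}G^{\cG^\cS_V}_{W'}$ under $c_{W'}\mapsto[E'_{W'}]$ for $W'\ne0$ and $c_0\mapsto-[E'_0]$. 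Hence it suffices to prove, for each fixed $V\in\cS\cup\{\bC^n\}$, the equality
$$p_V^*\Bigl(\text{image of }\prod_{W'\in\cG^\cS_V}G^{\cG^\cS_V}_{W'}\Bigr)=l_\cS\Bigl(\mathop{\prod_{V_\cS\subset W\varsubsetneq V}}_{W\in\cG}P^{\cS,V}_W\Bigr),$$
and then multiply over $V$.

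I would prove this by matching the two products factor by factor. Lemma \ref{lem. pullback of divisors}(a) gives a bijection between $\cG^\cS_V\setminus\{0\}$ and $\{W\in\cG:V_\cS\varsubsetneq W\varsubsetneq V,\ \{W\}\cup\cS\text{ nested}\}$, $W\mapsto W'=$ (image of $W$ in $V/V_\cS$), under which the codimension $r(W')$ computed in $\bC^\cS_V=V/V_\cS$ equals $\delta(V)-\delta(W)$. For $W'\ne0$ with partner $W$, one applies $p_V^*$ to the image of $G^{\cG^\cS_V}_{W'}$ and rewrites every $p_V^*[E'_{W''}]$ by Lemma \ref{lem. pullback of divisors}(a),(b): the middle factor $Q(c_{W'})$ becomes $Q([E_W]|_{E_\cS})$, and each argument $-\sum_{W''\subset W'}c_{W''}$ (resp.\ $-\sum_{W''\varsubsetneq W'}c_{W''}$), after the term $W''=0$ is expanded through part (b) and the convention $c_0\mapsto-[E_0]$ is taken into account, becomes $l_\cS\bigl(-\sum_{W'''\subset W,\ \{W'''\}\cup\cS\text{ nested}}c_{W'''}\bigr)$ (resp.\ with $W'''\varsubsetneq W$); matching the exponents $\pm(\delta(V)-\delta(W))$ one gets $p_V^*(\text{image of }G^{\cG^\cS_V}_{W'})=l_\cS(P^{\cS,V}_W)$. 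The $W'=0$ factor of the product is $G^{\cG^\cS_V}_0=Q(-c_0)^{\dim(V/V_\cS)}=Q(-c_0)^{\delta(V)-\delta(V_\cS)}$, and under $c_0\mapsto-[E'_0]$ followed by $p_V^*$ (part (b)) it becomes precisely $l_\cS(P^{\cS,V}_{V_\cS})$. Finally, the factors $P^{\cS,V}_W$ with $V_\cS\varsubsetneq W\varsubsetneq V$ but $\{W\}\cup\cS$ not nested are harmless: for such $W$ the two $Q(-\cdots)$ factors of $P^{\cS,V}_W$ have the same argument, so $P^{\cS,V}_W=Q(c_W)$, while $E_W\cap E_\cS=\emptyset$ forces $l_\cS(c_W)=0$, hence $l_\cS(P^{\cS,V}_W)=1$. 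Thus the product over $\cG^\cS_V$ matches the product over all $W$ with $V_\cS\subset W\varsubsetneq V$, which proves the displayed equality; taking the product over $V\in\cS\cup\{\bC^n\}$ finishes the argument.

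The main obstacle is the bookkeeping in the middle step: verifying that, after pulling the divisors $E'_{W'}$ back from $\bP(\bC^\cS_V)^{\cG^\cS_V}$ and collapsing the zero-subspace contributions via Lemma \ref{lem. pullback of divisors}(b), one recovers, under $l_\cS$ with the sign convention $c_0\mapsto-[E_0]$, exactly the index sets $\{W'''\subset W:\{W'''\}\cup\cS\text{ nested}\}$ appearing in Definition \ref{def. poly P} — in particular that the minus signs coming from the $Q(-\cdots)$ arguments, from $p_V^*[E'_0]=[E_0]-\sum[E_{\hat W}]$, and from $c_0\mapsto-[E_0]$ all recombine consistently, and that the codimension taken inside $\bC^\cS_V$ is indeed $\delta(V)-\delta(W)$. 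All of this is precisely what Lemma \ref{lem. pullback of divisors} packages (from \cite{DP}-4.3 and \cite{MS}-Propositions 2.8, 2.9), so once that lemma is in hand the verification reduces to a formal, if somewhat lengthy, identity of formal power series.
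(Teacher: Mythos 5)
Your proposal is correct and follows essentially the same route as the paper: assemble Lemma \ref{lem. todd class of E_S}, Corollary \ref{cor. todd class special can res}, and Lemma \ref{lem. pullback of divisors}, and then match the $G$-factors pulled back along $p_V^*$ with the $P$-factors appearing in $T^\cS$. The one place where you are more explicit than the paper is your observation that for $W\in\cG$ with $V_\cS\varsubsetneq W\varsubsetneq V$ but $\{W\}\cup\cS$ not nested, the factor $P^{\cS,V}_W$ collapses to $Q(c_W)$ and is sent to $1$ by $l_\cS$ because $E_W\cap E_\cS=\emptyset$; the paper's proof asserts $p_V^*G^{\cG^\cS_V}_{W'}=P^{\cS,V}_{\bar{W'}}$ and takes the product without commenting on this reconciliation between the index set $\cG^\cS_V$ and the full range $\{W\in\cG:V_\cS\subset W\varsubsetneq V\}$, so your extra step is a worthwhile clarification rather than a different method.
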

\begin{proof} For $V\in\cS\cup \{\bC^n\}$, let $p_V$ be the projection of $E_\cS$ onto the
factor $\bP(\bC^\cS_V)^{\cG^\cS_V}$ associated to the
decomposition in Proposition \ref{prop. decomposition of canonical
resolution}. Define a map of $\bQ$-algebras
\begin{align*}
p_V^* : \bQ [[c_{W'}]]_{W'\in\cG_V^\cS} \longrightarrow
\bQ[[c_W]]_{W\in\cG},
\end{align*}
as follows. For $W'\ne 0$, let $c_{W'} \mapsto c_W$, where
$W=\pi^{-1}(W')$ and $\pi: V \twoheadrightarrow \bC_V^\cS$. For
$W'=0$, let
$$c_0 \mapsto \mathop{\sum_{ W\subset V_\cS, W\in\cG}}_{
\{W\}\cup S \text{ nested} } c_W.$$ By Lemma \ref{lem. pullback of
divisors}, we have a commutative diagram of $\bQ$-algebras

$$\xymatrix{
\bQ [[c_{W'}]]_{W'\in\cG_V^\cS} \ar[r]^{p_V^*} \ar[d] & \bQ[[c_W]]_{W\in\cG} \ar[d]^{l_\cS} \\
H^*(\bP(\bC^\cS _V)^{\cG_V ^\cS},\bQ) \ar[r]^{\ p_V^*}&
H^*(E_\cS,\bQ).}
$$

For $W'\in \cG_V^\cS$, denote by $\bar{W'}$ the subspace
$\pi^{-1}(W')$ of $V$, where $\pi:V \twoheadrightarrow \bC^\cS_V$.
Then $p_V^*G_{W'}^{\cG^\cS_V}=P_{\bar{W'}}^{\cS,V}$. Then the
Proposition follows from Lemma \ref{lem. todd class of E_S} and
Corollary \ref{cor. todd class special can res}.
\end{proof}

\medskip
\noindent {\bf Proof of Lemma \ref{lem Computation of chi E_S}.}
Let $\cE$ be the invertible sheaf $\cO_{E_{\cS}}
(\sum_{V\in\cG}a_V'(c)E_V )$. By definition, for $i\ge 0$,
$ch_i(\cE)=(1/i!)(\sum_{V\in\cG} a_V'(c)[E_V]_{|\,E_\cS} )^i$.
Theorem \ref{thm HRR} allows us to write
\begin{align*}
 \chi(E_\cS,\cE) = \sum_{i+j=n-1-|\cS|}\frac{1}{i!}\left (\sum_{V\in\cG} a_V'(c)[E_V]_{|\,E_\cS}
 \right )^i\cdot Td_j(E_\cS).
\end{align*}
The Lemma follows from the map (\ref{eq. isom cohomology}) and
Proposition \ref{prop. todd class of E_S}. Remark that the map
$l_\cS$ from Proposition \ref{prop. todd class of E_S}, factors on
homogenous polynomials of degree $n-1-|\cS|$ via: multiplication
of the map (\ref{eq. isom cohomology}) with $\prod_{V\in\cS}c_V$.
\begin{flushright}
$\Box$
\end{flushright}

\medskip
\noindent {\bf Proof of Theorem \ref{thm. inner jumping numbers}.}
By \cite{Bu}- Proposition 2.7 (ii),
$$n_{c,x}(D)=\chi(\wti{Y}, \cO_{E^{\cS_{c,x}}}(K_{\wti{Y}/Y}-\rndown{(c-\epsilon)\rho^*
D})),$$ for $0<\epsilon \ll 1$, where $\cS_{c,x}$ is empty unless
$cd\in\bZ$ and there exists a divisor on $\wti{Y}$ mapping onto
$\{x\}$. In the later case, $\cS_{c,x}=\{V_x\}$. Thus
$$n_{c,x}(D)=\chi(\cO_{E^{\cS_{c,x}}}(\sum_{V\in\cG-\{0\}}a_V(c)E_V),$$
and the Theorem follows by replacing $a_0$ with $0$ in the proof
of Lemma \ref{lem Computation of chi E_S}.
\begin{flushright}
$\Box$
\end{flushright}

\end{section}

\begin{section}{Examples}

The following  examples illustrate how Theorems \ref{thm. jumping
numbers} and \ref{thm. inner jumping numbers} work.

\medskip
\noindent (a) Let $D$ be the union of three distinct lines
passing through one point in $\bP^2$. Let $\cA=\{V_1,V_2,V_3 \}$,
$V_i\subset\bC^3$ mutually distinct subspaces of dimension 2, with
$V_1\cap V_2\cap V_3=L$ where $\delta (L)=1$. Then
$D=\bP(V_1)+\bP(V_2)+\bP(V_3)$ as a divisor in $\bP(\bC^3)=\bP^2$.

Take $\cG=\{0,L,V_1,V_2,V_3\}$. By (\ref{eq, type 2}),
$c_0+c_L+c_{V_i}$ ($i=1,2,3$) belongs to the ideal $I$. We can
eliminate thus the variables $c_{V_i}$ ($i=1,2,3$) and have
$$\bQ[c_V]_{V\in\cG}/I\cong
\bQ[c_0,c_L]/(c_0^3,c_0c_L,(c_0+c_L)^2),$$ and, by (\ref{eq. isom
cohomology}), this is isomorphic with the cohomology ring of
$\wti{Y}$, the blow up of $\bP^2$ at $\bP(L)$.

The only $c\in (0,1)$ for which $\cS_c\ne \emptyset$ are $c=1/3,
2/3$. For both cases, $\cS_{c}=\{L\}$; call this set $\cS$. We
have
$$T^{\cS}=P_0^{\cS,L}P_L^{\cS,\bC^3}\prod_{1\le i\le
3}P_{V_i}^{\cS,\bC^3}.$$ From the fact that $Q(x)=1+\frac{1}{2}x
+\text{(degree }\ge 2\text{ terms)}$, we get
$$T^\cS=1+(-\frac{3}{2}c_0-c_L)+ \text{(degree }\ge 2\text{ terms)}.$$
It follows by Theorem \ref{thm. jumping numbers} that $c=1/3$ is a
jumping number for $D$ if and only if $-\frac{5}{2}c_0c_L$ does
not lie in the ideal $I\subset\bQ[c_V]_{V\in\cG}$. Also, $c=2/3$
is a jumping number if and only if $-\frac{5}{2}c_0c_L+c_L^2$ does
not lie in $I$. Therefore $c=\frac{2}{3}$ is the only jumping
number of $D$ in the interval $(0,1)$.
By Theorem \ref{thm. inner jumping numbers}, the inner jumping multiplicity at $x=\bP(L)$ of $c=2/3$ is given by writing $-\frac{3}{2}c_0c_L-c_L^2\in\bQ[c_V]_{V\in\cG}/I$ in terms of $c_0^2$. Thus $n_{x,\frac{2}{3}}(D)=1$. Also, $n_{x,1}(D)$ is given by $-\frac{3}{2}c_0c_L-2c_L^2$, hence $n_{x,1}(D)=2$. This gives the initial part of the spectrum of $D$ at $x$, and in fact (in this case by symmetry) the whole spectrum is $t^{2/3}+2t+t^{4/3}$.

\medskip
\noindent (b) Consider the central hyperplane arrangements in
$\bC^3$ given by
$$
(x^2-y^2)(x+z)(x+2z),$$
$$(x^2-y^2)(x^2-z^2).
$$
They are combinatorially equivalent. To apply the algorithm of
this article, we consider the completion of these arrangements to
$\bP^3$. Here $\cA=\{A_i\subset\bC^4\ |\  i=1,\ldots , 4\}$, and
$\cG=L(\cA)-\{\bC^4\}$ is given by
$$\{0,C,B_1,\ldots ,B_6,A_1,\ldots, A_4\},$$
where $C, B_j, A_i$ have codimension $3, 2,$ resp. $1$, $C$ is
included in all $B_j$, and $B_j\subset A_i$ if $(i,j)$ lies in
$$
M:=\{
(1,1),(1,2),(1,3),(2,2),(2,5),(2,6),(3,1),(3,4),(3,6),(4,3),(4,4),(4,5)
\}.
$$
The ideal $I$ is generated by $c_{A_i}+\sum_{(i,j)\in
M}c_{B_j}+c_C+c_0$, $c_0c_C$, $c_{B_j}c_{B_{j'}}$ with $j\ne j'$,
$c_{B_j}(c_0+c_C)$, and $c_{B_j}^2+c_0^2+c_C^2$. The only nonempty
$\cS_c$ for $c\in (0,1)$ are $\cS_{1/4}=\{C\}$,
$\cS_{2/4}=\{C,B_1,\ldots,B_6\}$, $\cS_{3/4}=\{C\}$. Then, modulo
$I$, we have
$$
T^{\{C\}}=-\frac{2}{3}c_0^3+c_C^2+\frac{11}{6}c_0^2+\frac{1}{4}(c_{B_1}+\ldots
+c_{B_6})c_0-\frac{1}{2}(c_{B_1}+\ldots
+c_{B_6})-\frac{3}{2}c_C-2c_0+1,
$$
$$
T^{\{C,B_j\}}=-\frac{5}{8}c_0^3+\frac{11}{2}c_C^2+\frac{1}{2}c_{B_j}c_0+\frac{7}{4}c_0^2-c_{B_j}-
\frac{3}{2}c_C-2c_0+1,
$$
$$
T^{\{B_j\}}=-c_0^3+\frac{1}{4}c_C^2+c_{B_j}c_0+\frac{7}{4}c_0^2-c_{B_j}-c_C-2c_0+1.
$$
One computes using Theorem \ref{thm. jumping numbers} that $1/4$
and $2/4$ are not jumping numbers, but $3/4$ is the only jumping
number in $(0,1)$. Using Theorem \ref{thm. inner jumping numbers},
one computes that the inner jumping multiplicities of $1/4$ and
$2/4$ are $0$, whereas the inner jumping multiplicities of $3/4$
and $1$ are $1$, and resp. $3$. By \cite{Bu}, these are the same
as the spectrum multiplicities. We used Macaulay 2 for some of the
computations above.

The jumping numbers in this case can be computed directly from
\cite{Te} - Lemma 2.1 (see also Lemma \ref{lemma reduction to
euler char} here) and the result agrees with ours. The spectrum in
this case can be computed by \cite{St} -Theorem 6.1 which treats
the case of homogeneous polynomials with 1-dimensional critical
locus, and the beginning part agrees with what we have found.
Remark that there is a shift by multiplication by $t$ between the
definition of spectrum of \cite{St} and that of \cite{Bu}.

\end{section}

\end{document}